\newtheorem{thm}{Theorem}
\newtheorem{rmk}{Remark}
\newcommand{\tr}{{\rm Tr}}
\newcommand{\st}{{\rm s.t.}}
\newcommand{\re}{\mathbb{R}}
\newcommand{\bW}{{\bf W}}
\newcommand{\bZ}{{\bf Z}}
\newcommand{\cX}{{\cal X}}
\newcommand{\xbar}{{\bar x}}
\newcommand{\bV}{{\textbf{V}}}
\newcommand{\bH}{{\mathbf{H}}}
\newcommand{\ghat}{{\hat g}}
\newcommand{\fhat}{{\hat f}}
\newcommand{\bA}{{\textbf{A}}}
\newcommand{\bs}{\mathbf{s}}
\newcommand{\bx}{\mathbf{x}}
\newcommand{\by}{\mathbf{y}}
\newcommand{\bI}{\mathbf{I}}
\newcommand{\bn}{\mathbf{n}}
\newcommand{\bE}{\mathbf{E}}
\newcommand{\bP}{\mathbf{P}}
\newcommand{\bU}{\mathbf{U}}
\newtheorem{lem}{Lemma}
\title{A Stochastic Successive Minimization Method for Nonsmooth Nonconvex Optimization}
\author{Meisam Razaviyayn \and Maziar Sanjabi \and Zhi-Quan Luo%~\IEEEmembership{Fellow,~IEEE}% <-this % stops a space
%\thanks{The authors are with the Department of Electrical and Computer Engineering, University of Minnesota, 200 Union Street SE, Minneapolis, MN 55455. Emails: \{meisam,maz,luozq\}@umn.edu.}
}
\institute{Meisam Razaviyayn \and Maziar Sanjabi \and Zhi-Quan Luo \at
              Department of Electrical and Computer Engineering, University of Minnesota, 200 Union Street SE, Minneapolis, MN 55455. \\
              \email{\{meisam,maz,luozq\}@umn.edu}           %  \\
%             \emph{Present address:} of F. Author  %  if needed
%           \and
%           S. Author \at
%              second address
}
\date{Received: date / Accepted: date}
\begin{document}

\maketitle

\begin{abstract}
Consider the problem of minimizing the expected value of a cost function parameterized by a random variable. The classical sample average approximation (SAA) method for solving this problem requires minimization of an ensemble average of the objective at each step, which can be expensive. % One of the major drawbacks of the SAA method is that each step of the algorithm involves solving an optimization problem which might be computationally expensive.
In this paper, we  propose a stochastic successive upper-bound minimization method (SSUM) which minimizes an \emph{approximate} ensemble average at each iteration. To ensure convergence and to facilitate computation, we require the approximate ensemble average to be a locally tight upper-bound of the expected cost function and be easily optimized. The main contributions of this work include the development and analysis of the SSUM method as well as its applications in linear transceiver design for wireless communication networks and online dictionary learning. Moreover, using the SSUM framework, we extend the classical stochastic (sub-)gradient (SG) method to the case of minimizing a nonsmooth nonconvex objective function and establish its convergence. %it is shown that both the SAA method and the stochastic gradient (SG) method are special cases of the SSUM framework. %For the stochastic wireless beamforming problem, our numerical experiments indicate that the SSUM approach significantly outperforms the other existing algorithms such as the SA method in terms of the achievable ergodic sum rate in the network.

\keywords{Stochastic Successive Upper-bound Minimization \and  Stochastic Successive Inner Approximation \and Sample Average Approximation \and Stochastic Beamformer Design}
\end{abstract}
%\begin{AMS}
%15A15, 15A09, 15A23
%\end{AMS}

%\pagestyle{myheadings}
%\thispagestyle{plain}

%\markboth{TEX PRODUCTION AND V. A. U. THORS}{SIAM MACRO EXAMPLES}

\section{Introduction}
\label{sec:intro}
Consider the optimization problem
\begin{equation}
\label{eq:OriginalProblem}
\begin{split}
\min \quad &f(x) \triangleq \mathbb{E}_\xi [g(x,\xi)]\\
\st \quad &x \in \mathcal{X},
\end{split}
\end{equation}
where $\mathcal{X} \subseteq  \re^{n}$ is a bounded closed convex set; $\xi$ is a random vector drawn from a set $\Xi \in \re^m$, and $g:\cX \times \Xi \mapsto \mathbb{R}$ is a real-valued function.  A classical approach for solving the above optimization problem is the sample average approximation (SAA) method. At each iteration of the SAA method, a new realization  of the random vector $\xi$ is obtained and the optimization variable $x$ is updated by solving
\begin{equation}
\label{eq:SAA}
\begin{split}
x^r \in \; \arg \min \;\;&\frac{1}{r}\sum_{i=1}^r g(x,\xi^i)\\
\st \quad & x \in \mathcal{X}.
\end{split}
\end{equation}%
Here $\xi^1,\xi^2, \ldots$ are some independent, identically distributed realizations of the random vector $\xi$. %Although the exact historical origin of the SAA method is not apparent, the idea was perhaps initially introduced to the optimization community by the works  in the sample path optimization and the stochastic counterpart method
 We refer the readers to \cite{plambeck1996sample,robinson1996analysis, healy1991retrospective, rubinstein1990optimization, rubinstein1993discrete} for the roots of the SAA method and \cite{shapiro2003monte, shapiro2009lectures, kim2011guide} for several surveys on SAA.

 A main drawback of the SAA method is the complexity of each step. In general, solving \eqref{eq:SAA} may not be easy due to the non-convexity and/or non-smoothness of $g(\cdot,\xi)$. To overcome the difficulties in  solving the subproblem \eqref{eq:SAA}, we propose an inexact SAA method whereby at each step a well-chosen approximation of the function $g(\cdot, \xi)$ in \eqref{eq:SAA} is minimized. Specifically, at each iteration $r$, we update the optimization variable according to
\begin{equation}
\label{eq:SSUMupdate}
x^r \leftarrow \arg\min_{x \in\mathcal{X}} \; \frac{1}{r} \sum_{i=1}^r \ghat(x,x^{i-1},\xi^i),
\end{equation}
where $\ghat(\cdot,x^{i-1},\xi^i)$ is an approximation of the function $g(\cdot,\xi^i)$ around the point $x^{i-1}$. To ensure the convergence of this method, we require the approximation function $\ghat(\cdot, x^{i-1}, \xi^i)$ to be a locally tight upper bound of the original function $g(\cdot,\xi^i)$ around the point $x^{i-1}$, for each $i=0,\ldots,r-1$. For this reason, we call the above algorithm \eqref{eq:SSUMupdate} a stochastic successive upper-bound minimization method (SSUM).

The idea of successive upper-bound minimization (also known as majorization minimization or successive convex optimization) has been widely studied in the literature for deterministic optimization problems; see \cite{BSUM} and the references therein. In the successive upper-bound minimization (SUM) framework, a locally tight approximation of the function is minimized at each step of the algorithm. This technique is key to many important practical algorithms such as the concave-convex procedure \cite{CCCP} and the expectation maximization algorithm \cite{EMTutorial,EMDempster}.

While the successive upper-bound minimization idea is well studied and widely used in deterministic settings, very little is known about its use in the stochastic setup. The main contributions of this paper are to extend the technique of successive upper-bound minimization to the stochastic setup \eqref{eq:OriginalProblem} and to illustrate its use in applications. In particular, we first establish the convergence of SSUM defined by \eqref{eq:SSUMupdate}, and then describe two important applications of the SSUM framework: the sum rate maximization problem for wireless communication networks and the online dictionary learning problem.  For the stochastic wireless beamforming problem, our numerical experiments indicate that the SSUM approach significantly outperforms the other existing algorithms in terms of the achievable ergodic sum rate in the network. In addition, we show that the traditional stochastic gradient (SG) algorithm for unconstrained smooth minimization is a special case of the SSUM method. Moreover, using the SSUM framework, we extend the SG algorithm to the problem of minimizing a nonsmooth nonconvex objective function and establish its convergence.

\subsection{Technical Preliminaries}
Throughout the paper we adopt the following notations. We use $\mathbb{R}$ to signify the set of real numbers. The notation $\mathbb{E}(\cdot)$ is used to represent the expectation operator. Unless stated otherwise, all relations between random variables hold almost surely in this paper. A list of other definitions adopted in the paper are given below.
\begin{itemize}
\item \textbf{Distance of a point to a set:} Given a non-empty set $S\subset \mathbb{R}^{n}$ and a point $x\in \mathbb{R}^{n}$, the distance of the point $x$ to the set $S$ is defined as
\begin{align}
d(x,S)~\triangleq~\inf_{s\in S}~\|x-s\|, \nonumber
\end{align}
where $\|\cdot\|$ denotes the 2-norm in $\mathbb{R}^{n}$.
\item \textbf{Directional derivative:} Let $h:~D\mapsto\mathbb{R}$ be a function, where $D\subseteq \mathbb{R}^{n}$ is a convex set. The directional derivative of the function $h$ at a point $x\in D$ in the direction $d\in R^{n}$ is defined as
\begin{align}
h'(x;d)~\triangleq ~\liminf_{t\downarrow 0}\; \frac{h(x+td)-h(x)}{t}. \nonumber
\end{align}
Moreover, we define $h'(x;d) \triangleq +\infty$, if $x+td\notin D$, $\forall\; t>0$.
\item \textbf{Stationary points of a function:} Let $h:~D\mapsto\mathbb{R}$ be a function, where $D\subseteq \mathbb{R}^{n}$ is a convex set. The point $x\in \mathbb{R}^{n}$ is a stationary point of $h(\cdot)$ if
\begin{align}
h'(x;d)\geq 0, \;\;\;\;\;\;\;\forall~d\in \mathbb{R}^{n}. \nonumber
\end{align}
\item \textbf{Natural history of a stochastic process:} Consider a real valued stochastic process $\{Z^{r}\}_{r=1}^{\infty}$. For each $r$, we define the natural history of the stochastic process up to time $r$ as
\begin{align}
\mathcal{F}^{r}~=~\sigma (Z^{1},\ldots,Z^{r}), \nonumber
\end{align}
where $\sigma(Z^{1},\ldots,Z^{r})$ denotes the $\sigma$-algebra generated by the random variables $Z^{1},\ldots,Z^{r}$.
\item \textbf{Infinity norm of a function:} Let $h:~D\mapsto \mathbb{R}$ be a function, where $D\subseteq \mathbb{R}^{n}$. The infinity norm of the function $h(\cdot)$ is defined as
\[
\|h\|_\infty \triangleq \sup_{x\in D} |h(x)|.
\]
\end{itemize}
\section{Stochastic Successive Upper-bound Minimization}
\label{sec: SSUM}
In the introduction section, we have briefly outlined the Stochastic Successive Upper-bound Minimization (SSUM) algorithm as an inexact version of the SAA method. In this section, we describe the SSUM algorithm and the associated assumptions more precisely, and provide a complete convergence analysis.

Consider the optimization problem
\begin{align}
\min~&~\left\{f(x)\triangleq \mathbb{E}_{\xi}\left[g_{1}(x,\xi)+g_{2}(x,\xi)\right]\right\}\label{eq: Prob}\\
\st ~&~ x\in \mathcal{X},\nonumber
\end{align}
where $\mathcal{X}$ is a bounded closed convex set and $\xi$ is a random vector drawn from a set $\Xi\in \mathbb{R}^{m}$. We assume that the function $g_{1}:~\mathcal{X}\times \Xi \mapsto \mathbb{R}$ is a continuously differentiable (and possibly non-convex) function in $x$, while  $g_{2}:~\mathcal{X}\times \Xi\mapsto \re$ is a convex continuous (and possibly non-smooth) function in $x$. Due to the non-convexity and non-smoothness of the objective function, it may be difficult to solve the subproblems \eqref{eq:SAA} in the SAA method. This motivates us to consider an inexact SAA method by using an approximation of the function $g(\cdot,\xi)$ in the SAA method \eqref{eq:SAA} as follows:
\begin{align}
x^{r}~\leftarrow~&\arg\min_{x}\;\;\frac{1}{r}\sum_{i=1}^{r}\left({\hat g}_{1}(x,x^{i-1},\xi^{i})+g_{2}(x,\xi^{i})\right)\label{eq: SSUM}\\
&\st ~~~~x\in \mathcal{X},\nonumber
\end{align}
where ${\hat g}_{1}(x, x^{i-1},\xi^{i})$ is an approximation of the function $g_{1}(x,\xi^{i})$ around the point $x^{i-1}$. Table \ref{table: SSUM} summarizes the SSUM algorithm.

\begin{table*}
\caption{The SSUM algorithm }\label{table: SSUM}
\begin{center}
\begin{tabular}{|l|}
\hline \\[0pt]
 Find a feasible point $x^{0}\in\mathcal{X}$ and set $r=0$.\\
\textbf{repeat}\\
~~~~~$r\leftarrow r+1$\\
~~~~~$\displaystyle x^{r}\leftarrow\arg\min_{x\in\mathcal{X}}~\frac{1}{r}\sum_{i=1}^{r}\left({\hat g}_{1}(x,x^{i-1},\xi^{i})+g_{2}(x,\xi^{i})\right)$\\
\textbf{until} some convergence criterion is met.\\[5pt]
\hline
\end{tabular}
\end{center}
\end{table*}
Clearly, the function $\ghat_1(x,y,\xi)$  should be related to the original function $g_1(x,\xi)$. In this paper, we assume that the approximation function $\ghat_1(x,y,\xi)$ satisfies the following conditions. \\

{\noindent\bf Assumption A:}\quad \\
Let $\cX'$ be an open set containing the set $\cX$. Suppose the approximation function $\ghat(x,y,\xi)$ satisfies the following
\begin{itemize}
\item[A1-] $\ghat_1(y,y,\xi) = g_1(y,\xi), \quad \forall\ y \in \cX, \;\forall\ \xi \in \Xi$
\item[A2-] $\ghat_1(x,y,\xi) \geq g_1 (x,\xi),\quad \forall\ x\in \cX', \;\forall\ y\in \cX,\; \forall\ \xi \in \Xi$
\item[A3-] $\ghat(x,y,\xi) \triangleq \ghat_1(x,y,\xi) + g_2(x,\xi)$ is uniformly strongly convex in $x$, i.e., for all
$\ (x,y,\xi) \in \cX\times\cX\times\Xi$,
\[
 \ghat(x+d,y,\xi) - \ghat(x,y,\xi) \geq \ghat'(x,y,\xi;d) + \frac{\gamma}{2}\|d\|^2,\  \;\forall\ d\in\mathbb{R}^n,%\; \forall\ \xi \in \Xi
 \]
 where $\gamma>0$ is a constant.
\end{itemize}

The assumptions A1-A2 imply that the approximation function $\ghat_1(\cdot,y,\xi)$ should be a locally tight approximation of the original function $g_1(\cdot,\xi)$. We point out that the above assumptions can be satisfied in many cases by the right choice of the approximation function and hence are not restrictive. For example, the approximation function $\ghat_1(\cdot, y,\xi)$ can be made strongly convex easily to satisfy Assumption A3 even though the function $g_1(\cdot,y)$ itself is not even convex; see Section 3 and Section 4 for some examples.

To ensure the convergence of the SSUM algorithm, we further make the following assumptions.
\\

{\noindent\bf Assumption B:}\quad
\begin{itemize}
\item[B1-] The functions $g_1(x,\xi)$ and $\ghat_1(x,y,\xi)$ are continuous in $x$ for every fixed $y\in \cX$ and $\xi \in \Xi$
\item[B2-] The feasible set $\mathcal{X}$ is bounded
\item[B3-] The functions $g_1(\cdot,\xi)$ and $\ghat_1(\cdot,y,\xi)$, their derivatives, and their second order derivatives are uniformly bounded. In other words, there exists a constant $K>0$ such that  for all $(x,y,\xi) \in \cX\times \cX\times \Xi$ we have
\[
\begin{array}{lll}
|g_1(x,\xi)| \leq K, & \|\nabla_x g_1(x,\xi)\|\leq K, & \|\nabla_x^2 g_1(x,\xi)\|\leq K,\\
|\ghat_1(x,y,\xi)| \leq K, & \|\nabla_x \ghat_1(x,y,\xi)\|\leq K, & \|\nabla_x^2 \ghat_1(x,y,\xi)\|\leq K,\\
\end{array}
\]
\item[B4-] The function $g_2(x,\xi)$ is convex in $x$ for every fixed $\xi \in \Xi$
\item[B5-] The function $g_2(x,\xi)$ and its directional derivative are uniformly bounded. In other words, there exists $K'>0$ such that for all $(x,\xi)\in\cX\times \Xi$,  we have $|g_2(x,\xi)|\leq K'$ and
\[
|g_2'(x,\xi;d)| \leq K' \|d\|, \quad \ \forall\ d \in \mathbb{R}^n \; {\rm with}\; x+d \in \cX.
\]
\item[B6-] Let $\ghat(x,y,\xi)=\ghat_1(x,y,\xi)+g_2(x,y,\xi)$. There exists $ \bar{g} \in \mathbb{R}$ such that
\[
  |\ghat(x,y,\xi)|\leq \bar{g}, \quad \forall\;(x,y,\xi) \in \cX\times\cX\times\Xi.
\]
\end{itemize}

Notice that in the assumptions B3 and B5, the derivatives are taken with respect to the $x$ variable only. Furthermore, one can easily check that the assumption B3 is automatically satisfied if the functions $g_1(x,\xi)$ and $\ghat_1(x,y,\xi)$ are continuously second order differentiable with respect to $(x,y,\xi)$ and the set $\Xi$ is bounded; or when $g_1(x,\xi)$ and $\ghat_1(x,y,\xi)$ are continuous and second order differentiable in $(x,y)$ and $\Xi$ is finite. As will be seen later, this assumption can be easily satisfied in various practical problems. It is also worth mentioning that since the function $g_2(x,\xi)$ is assumed to be convex in $x$ in B4, its directional derivative with respect to $x$ in B5 can be written as
\begin{align}
g_2'(x,\xi;d) &= \liminf_{t\downarrow 0} \;\;\frac{g_2(x+td,\xi) - g_2(x,\xi)}{t} \nonumber\\
&= \inf_{t>0} \;\;\frac{g_2(x+td,\xi) - g_2(x,\xi)}{t} \nonumber\\
&= \lim_{t\downarrow 0} \;\;\frac{g_2(x+td,\xi) - g_2(x,\xi)}{t}. \label{eq:liminflimdirderivConvex}
\end{align}

The following theorem establishes the convergence of the SSUM algorithm.
\begin{thm}
\label{thm:Convergence}
Suppose that Assumptions A and B are satisfied. Then the iterates generated by the SSUM algorithm converge to the set of stationary points of \eqref{eq:OriginalProblem} almost surely, i.e.,
\[
\lim_{r \rightarrow \infty} d(x^r,\cX^*) = 0,
\]
where $\cX^*$ is the set of stationary points of \eqref{eq:OriginalProblem}.
\end{thm}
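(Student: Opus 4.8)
The plan is to monitor the optimal value of the running surrogate. Write $\hat f^r(x)\triangleq\frac1r\sum_{i=1}^r\ghat(x,x^{i-1},\xi^i)$ for the strongly convex objective minimized at step $r$, and $f^r(x)\triangleq\frac1r\sum_{i=1}^r\big(g_1(x,\xi^i)+g_2(x,\xi^i)\big)$ for the empirical objective; let $\hat f_1^r,f_1^r,f_2^r$ denote the running averages of $\ghat_1,g_1,g_2$, so that $\hat f^r=\hat f_1^r+f_2^r$ and $f^r=f_1^r+f_2^r$ share the identical nonsmooth part $f_2^r$, and set $u_r\triangleq\hat f^r(x^r)$. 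Three facts will be used as black boxes: (i) by A3 each $\hat f^r$ is $\gamma$-strongly convex, so $x^r$ is its unique minimizer; (ii) a uniform strong law of large numbers, which under B2--B3,B5 over the compact $\cX$ gives both $\mathbb E\,\|f-f^{r}\|_\infty\le C\sqrt{(\log r)/r}$ and $\sup_{x}\|\nabla f_1^r(x)-\nabla f_1(x)\|\to0$ a.s., where $f_1(x)=\mathbb E[g_1(x,\xi)]$ is $C^1$ with $\nabla f_1=\mathbb E[\nabla_xg_1]$ (dominated convergence, B3); and (iii) the quasi-martingale convergence theorem. The first task is to prove $u_r$ converges a.s. From $x^r=\arg\min\hat f^r$, the recursion $\hat f^r=\frac{r-1}{r}\hat f^{r-1}+\frac1r\ghat(\cdot,x^{r-1},\xi^r)$, and the tightness A1 (so $\ghat(x^{r-1},x^{r-1},\xi^r)=g(x^{r-1},\xi^r)$), I obtain $u_r\le\hat f^r(x^{r-1})=\frac{r-1}{r}u_{r-1}+\frac1r g(x^{r-1},\xi^r)$. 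Conditioning on $\mathcal F^{r-1}$ (so that $\xi^r$ is independent of the $\mathcal F^{r-1}$-measurable $x^{r-1}$) replaces $g(x^{r-1},\xi^r)$ by $f(x^{r-1})$; using $u_{r-1}\ge f^{r-1}(x^{r-1})$ from the upper-bound property A2, the positive part of the conditional drift is at most $\frac1r\|f-f^{r-1}\|_\infty$, whose expectation is summable by the SLLN rate. Boundedness from B6 then lets the quasi-martingale theorem deliver $u_r\to u_\infty$ a.s.

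The crux is the second task: to show the value gap $e_r\triangleq\hat f^r(x^r)-f^r(x^r)=\hat f_1^r(x^r)-f_1^r(x^r)\ge0$ tends to $0$. Two ingredients combine. On one hand, keeping the term $u_{r-1}-f^{r-1}(x^{r-1})=e_{r-1}$ in the drift bound and telescoping $\mathbb E u_r$ (bounded by B6) gives $\sum_r\frac1r\mathbb E[e_r]<\infty$, hence $\sum_r\frac1r e_r<\infty$ a.s. On the other hand, since $\hat f^r$ and $\hat f^{r-1}$ differ by a perturbation of Lipschitz modulus $O(1/r)$ (the $\frac1r$ reweighting plus one uniformly bounded term, using B3,B5), strong convexity A3 forces the quantitative iterate stability $\|x^r-x^{r-1}\|=O\!\big(1/(\gamma r)\big)$, which together with the uniform Lipschitz bounds yields the increment control $|e_r-e_{r-1}|=O(1/r)$. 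A purely deterministic lemma then upgrades $\liminf e_r=0$ to $\lim e_r=0$: were $\limsup e_r>0$, each excursion above a fixed threshold would, by the $O(1/r)$ increments, persist over an index interval of geometric length and thus contribute a fixed positive amount to $\sum_r\frac1r e_r$, contradicting its finiteness.

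With $e_r\to0$ in hand the rest is comparatively routine. Because the nonsmooth part $g_2$ enters $\hat f^r$ and $f^r$ identically, their difference $h_r\triangleq\hat f_1^r-f_1^r\ge0$ (A2) is $C^{1,1}$ with Hessian bounded by $2K$ (B3) and $h_r(x^r)=e_r$; the elementary bound $\|\nabla h_r(x^r)\|\le 2\sqrt{K\,h_r(x^r)}$ for a nonnegative $C^{1,1}$ function on the open set $\cX'\supseteq\cX$ then gives $\|\nabla\hat f_1^r(x^r)-\nabla f_1^r(x^r)\|\to0$, and with the uniform SLLN for $\nabla f_1^r$ and continuity of $\nabla f_1$, along any subsequence $x^{r_k}\to x^*$ we get $\nabla\hat f_1^{r_k}(x^{r_k})\to\nabla f_1(x^*)$. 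Finally, optimality of $x^r$ for $\hat f^r=\hat f_1^r+f_2^r$ together with convexity of $f_2^r$ gives, for every $x\in\cX$, $\nabla\hat f_1^r(x^r)^{\!\top}(x-x^r)+f_2^r(x)-f_2^r(x^r)\ge0$; letting $k\to\infty$ (uniform SLLN for $f_2^r\to f_2$) yields $\nabla f_1(x^*)^{\!\top}(x-x^*)+f_2(x)-f_2(x^*)\ge0$ for all $x\in\cX$. Substituting $x=x^*+t(x-x^*)$, dividing by $t$ and letting $t\downarrow0$ (invoking \eqref{eq:liminflimdirderivConvex}) turns this into $f'(x^*;x-x^*)\ge0$; positive homogeneity and the $+\infty$ convention for infeasible directions extend it to $f'(x^*;d)\ge0$ for all $d$, so $x^*$ is stationary. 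Since $\cX$ is compact and $\cX^*$ closed, ``every limit point is stationary'' is equivalent to $d(x^r,\cX^*)\to0$.

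The main obstacle is exactly the second task, and conceptually it is where the moving anchors $x^{i-1}$ are tamed. Because each surrogate term is centered at a different, data-dependent point, $\hat f^r$ does not converge to any fixed function, so one cannot argue through a limiting surrogate; the key realization is that it suffices to control the scalar gap $e_r$ and then transfer its vanishing to the gradients via the nonnegativity-plus-smoothness inequality. Making $e_r\to0$ rigorous requires pairing the quasi-martingale summability $\sum_r\frac1r e_r<\infty$ with the $O(1/r)$ iterate stability through the deterministic lemma; both the stability estimate (a strong-convexity perturbation bound) and the uniform SLLN rate feeding the summability are the quantitatively delicate points, whereas the surrogate-value convergence of the first task and the variational passage of the third are comparatively mechanical.
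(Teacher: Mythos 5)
Your proof is correct, and its core engine is the same as the paper's: the hard step is exactly the paper's Lemma~\ref{lemma:mainlemma}, and you reproduce it with the same three ingredients (quasi-martingale convergence of $u_r=\fhat^r(x^r)$, summability of $\sum_r e_r/r$ with $e_r=\fhat^r(x^r)-f^r(x^r)\ge 0$, and the $O(1/r)$ iterate stability from strong convexity feeding a deterministic lemma that upgrades $\liminf e_r=0$ to $\lim e_r=0$). You deviate in two places, one of which is an outright simplification. First, to get $\sum_r e_r/r<\infty$ almost surely, the paper sums its drift inequality pathwise and must then control $\sum_r\|f-f^r\|_\infty/(r+1)$ almost surely, which it does via the Hewitt--Savage zero-one law plus a Markov bound; you instead take total expectations, telescope $\mathbb{E}[u_r]$ using B6 and the Donsker-type rate, and apply Tonelli to the nonnegative terms $e_r/r$ --- this removes the zero-one-law detour entirely and is cleaner. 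Second, to convert $e_r\to 0$ into stationarity, the paper extracts a uniform limit $\fhat_1$ of $\fhat_1^{r_j}$ by Arzel\`a--Ascoli, notes that $\fhat_1-f_1\ge 0$ vanishes at $\xbar$, and reads off $\nabla\fhat_1(\xbar)=\nabla f_1(\xbar)$ from interior optimality of that minimum; you avoid the limit surrogate altogether by applying the elementary bound $\|\nabla h\|\le C\sqrt{h}$ for nonnegative functions with Lipschitz gradient to $h_r=\fhat_1^r-f_1^r$ and then passing to the limit in the variational inequality, at the price of needing a uniform law of large numbers for $\nabla f_1^r$ (which does hold under B2--B3 by the same equicontinuity-plus-compactness reasoning the paper uses for $f^r$). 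Two small points to tighten: the inequality $\|\nabla h_r(x^r)\|\le 2\sqrt{K\,h_r(x^r)}$ requires $h_r\ge 0$ on a ball around $x^r$ contained in $\cX'$, so you should use the truncated form $\|\nabla h_r(x^r)\|\le h_r(x^r)/t+Kt$ with $t=\min\{\delta,\sqrt{h_r(x^r)}\}$, where $\delta>0$ is the uniform margin of the compact set $\cX$ inside the open set $\cX'$; and B3 states its derivative bounds only on $\cX$ rather than $\cX'$, though the paper's own interior first-order-optimality step has the same implicit need, so this is a shared wrinkle rather than a gap you introduced.
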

To facilitate the presentation of the proof, let us define the random functions
\begin{align}
f_1^r(x) &\triangleq \frac{1}{r} \sum_{i=1}^r g_1(x,\xi^i), \nonumber\\
f_2^r(x) &\triangleq \frac{1}{r} \sum_{i=1}^r g_2(x,\xi^i), \nonumber\\
\fhat_1^r(x) &\triangleq \frac{1}{r} \sum_{i=1}^r \ghat_1(x,x^{i-1},\xi^i), \nonumber\\
f^r(x) &\triangleq f_1^r(x) + f_2^r(x),\nonumber\\
\fhat^r(x) &\triangleq \fhat_1^r(x) + f_2^r(x),\nonumber
\end{align}
for $r = 1,2,\ldots$. Clearly, the above random functions depend on the realization $\xi^1,\xi^2,\ldots$ and the choice of the initial point $x^0$. Now we are ready to prove Theorem~\ref{thm:Convergence}.
\begin{proof}
First of all, since the iterates $\{x^r\}$ lie in a compact set, it suffices to show that every limit point of the iterates is a stationary point. To show this, let us consider a subsequence $\{x^{r_j}\}_{j=1}^\infty$ converging to a limit point ${\bar x}$. Note that since $\cX$ is closed, ${\bar x} \in \cX$ and therefore $\xbar$ is a feasible point. Moreover, since $|g_1(x,\xi)|<K, \; |g_2(x,\xi)|<K'$ for all $\xi \in \Xi$ (due to B3 and B5), using the strong law of large numbers \cite{FristedtProbabilityBook}, one can write
\begin{align}
\lim_{r\rightarrow \infty} \;f_1^r(x) = \mathbb{E}\left[g_1(x,\xi)\right]\triangleq f_1(x),\quad \forall\ x\in \cX, \label{eq:proof10}\\
\lim_{r\rightarrow \infty} \;f_2^r(x) = \mathbb{E}\left[g_2(x,\xi)\right]\triangleq f_2(x),\quad \forall\ x\in \cX. \label{eq:proof11}
\end{align}

\noindent Furthermore, due to the assumptions B3, B5, and \eqref{eq:liminflimdirderivConvex}, the family of functions $\{f_1^{r_j}(\cdot)\}_{j=1}^\infty$ and $\{f_2^{r_j}(\cdot)\}_{j=1}^\infty$ are equicontinuous and therefore by restricting to a subsequence, we have
\begin{align}
\lim_{j \rightarrow \infty} \;f_1^{r_j} (x^{r_j}) = \mathbb{E}_\xi \left[g_1(\xbar,\xi)\right], \label{eq:proof12}\\
\lim_{j \rightarrow \infty} \;f_2^{r_j} (x^{r_j}) = \mathbb{E}_\xi \left[g_2(\xbar,\xi)\right]. \label{eq:proof13}
\end{align}
On the other hand, $\|\nabla_x \ghat(x,y,\xi)\|<K ,\; \forall\ x,y,\xi$ due to the assumption B3 and therefore the family of functions $\{\fhat_1^r(\cdot)\}$ is equicontinuous. Moreover, they are bounded and defined over a compact set; see B2 and B4. Hence the Arzel$\rm\grave{a}$--Ascoli theorem \cite{dunford1958linear} implies that, by restricting to a subsequence, there exists a uniformly continuous function $\fhat_1(x)$ such that
\begin{align}
\lim_{j \rightarrow \infty} \; \fhat_1^{r_j} (x) = \fhat_1(x),\quad \forall\ x\in \cX, \label{eq:proof14}
\end{align}
and
\begin{align}
\lim_{j \rightarrow \infty} \; \fhat_1^{r_j} (x^{r_j}) = \fhat_1(\xbar),\quad \forall\ x\in \cX. \label{eq:proof15}
\end{align}
Furthermore, it follows from assumption A2 that
\begin{align}
\fhat_1^{r_j} (x) \geq f_1^{r_j}(x),\quad \forall\ x \in \cX'.\nonumber
\end{align}
Letting $j \rightarrow \infty$ and using \eqref{eq:proof10} and \eqref{eq:proof14}, we obtain
\begin{align}
\fhat_1 (x) \geq f_1(x),\quad \forall\ x \in \cX'.
\label{eq:proof16}
\end{align}
On the other hand, using the update rule of the SSUM algorithm, one can show the following lemma.
\begin{lem}
\label{lemma:mainlemma}
$\lim_{r \rightarrow \infty} \fhat_1^r(x^r) - f_1^r(x^r) = 0, \; {\rm almost \; surely.}$
\end{lem}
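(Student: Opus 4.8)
The plan is to track the value of the surrogate at the iterate, $Y^r \triangleq \fhat^r(x^r)$, and to show that it is (up to an additive constant) a nonnegative quasi-martingale whose increments control the desired gap. First I would record the elementary facts. Summing A2 gives $\fhat_1^r(x) \ge f_1^r(x)$ on $\cX'$, so the quantity in the lemma satisfies $u^r \triangleq \fhat_1^r(x^r) - f_1^r(x^r) = \fhat^r(x^r) - f^r(x^r) \ge 0$ (the $f_2^r$ terms cancel). Using the one-step recursion $r\fhat^r(x) = (r-1)\fhat^{r-1}(x) + \ghat(x,x^{r-1},\xi^r)$, the optimality of $x^r$ for $\fhat^r$, and A1 (which turns $\ghat(x^{r-1},x^{r-1},\xi^r)$ into $g(x^{r-1},\xi^r)$, with $g \triangleq g_1+g_2$), I would obtain the key inequality
\[
Y^r \le Y^{r-1} + \frac{g(x^{r-1},\xi^r) - Y^{r-1}}{r}.
\]

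Next, taking conditional expectation given $\mathcal{F}^{r-1}$ and using the independence of $\xi^r$ from $\mathcal{F}^{r-1}$ together with the $\mathcal{F}^{r-1}$-measurability of $x^{r-1}$ yields $\mathbb{E}[Y^r - Y^{r-1} \mid \mathcal{F}^{r-1}] \le (f(x^{r-1}) - Y^{r-1})/r$. Since $Y^{r-1} \ge f^{r-1}(x^{r-1})$ and $u^{r-1} = Y^{r-1} - f^{r-1}(x^{r-1})$, this rewrites as
\[
\mathbb{E}[Y^r - Y^{r-1} \mid \mathcal{F}^{r-1}] \le \frac{\big(f(x^{r-1}) - f^{r-1}(x^{r-1})\big) - u^{r-1}}{r} \le \frac{\|f - f^{r-1}\|_\infty}{r}.
\]
The boundedness assumptions B3, B5, B6 make $Y^r + \bar g \ge 0$, and a uniform law of large numbers with rate (obtained from the compactness of $\cX$ in B2 and the uniform Lipschitz/boundedness in B3, B5) gives $\mathbb{E}\|f - f^{r-1}\|_\infty = O(r^{-1/2})$, so the positive variations are summable: $\sum_r \mathbb{E}\big[(\mathbb{E}[Y^r - Y^{r-1}\mid\mathcal{F}^{r-1}])^+\big] < \infty$. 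The quasi-martingale convergence theorem then shows that $Y^r$ converges almost surely and that $\sum_r |\mathbb{E}[Y^r - Y^{r-1}\mid\mathcal{F}^{r-1}]| < \infty$ a.s.; combining this with the displayed inequality (and with $\sum_r \|f-f^{r-1}\|_\infty / r < \infty$ a.s., which holds because the sum is finite in expectation by the rate bound) yields
\[
\sum_{r} \frac{u^{r-1}}{r} < \infty \quad \text{almost surely.}
\]

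Finally I would upgrade this weighted summability to $u^r \to 0$. This last step is purely deterministic: if $u^r \ge 0$, $\sum_r u^r / r < \infty$, and $|u^r - u^{r-1}| = O(1/r)$, then $u^r \to 0$. To verify the increment bound, write $u^r - u^{r-1} = [v^r(x^r) - v^{r-1}(x^r)] + [v^{r-1}(x^r) - v^{r-1}(x^{r-1})]$ with $v^r(x) \triangleq \fhat_1^r(x) - f_1^r(x)$; the first bracket is $O(1/r)$ because each new summand is bounded (B3), and the second is bounded by the Lipschitz constant of $v^{r-1}$ (again B3) times $\|x^r - x^{r-1}\|$. The stability estimate $\|x^r - x^{r-1}\| = O(1/r)$ is where uniform strong convexity A3 enters: comparing the minimizers $x^{r-1}$ and $x^r$ of the $\gamma$-strongly convex functions $\fhat^{r-1}$ and $\fhat^r$, which differ by an $O(1/r)$ term, and invoking the Lipschitz bounds from B3, B5, gives $\gamma\|x^r - x^{r-1}\|^2 \le (C/r)\|x^r - x^{r-1}\|$, hence $\|x^r - x^{r-1}\| \le C/(\gamma r)$.

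The main obstacle is the gap between what the martingale analysis delivers and what the lemma asserts: the quasi-martingale machinery naturally produces only the weighted summability $\sum_r u^{r-1}/r < \infty$, which does not by itself force $u^r \to 0$. Closing this gap requires the step-stability estimate $\|x^r - x^{r-1}\| = O(1/r)$, so that $u^r$ cannot oscillate, and this is precisely where the uniform strong convexity A3 is indispensable. A secondary but essential technical point is producing a uniform-deviation \emph{rate} $\mathbb{E}\|f - f^r\|_\infty = O(r^{-1/2})$, rather than mere almost-sure uniform convergence, since only a rate summable against $1/r$ lets the quasi-martingale theorem apply.
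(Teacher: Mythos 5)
Your proposal follows essentially the same route as the paper: the same one-step recursion and optimality argument yielding the quasi-martingale inequality, the same $O(r^{-1/2})$ uniform-deviation bound (the paper invokes Donsker's theorem), the quasi-martingale convergence theorem to get $\sum_r u^r/(r+1)<\infty$ a.s., and the same step-stability estimate $\|x^{r+1}-x^r\|=O(1/r)$ from strong convexity feeding into the same deterministic "weighted summability plus slow variation implies convergence to zero" lemma. The only (harmless) deviation is that you deduce $\sum_r \|f-f^r\|_\infty/(r+1)<\infty$ a.s. directly from finiteness of its expectation, whereas the paper routes this through the Hewitt--Savage zero-one law plus Markov's inequality; your shortcut is valid and in fact simpler.
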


%{\noindent \bf Lemma 1:  $\lim_{r \rightarrow \infty} \fhat_1^r(x^r) - f_1^r(x^r) = 0, \; almost \; surely.$}\\
The proof of Lemma~\ref{lemma:mainlemma} is relegated to the appendix section.

Combining Lemma~\ref{lemma:mainlemma} with \eqref{eq:proof12} and \eqref{eq:proof15} yields
\begin{align}
\fhat_1(\xbar) = f_1(\xbar). \label{eq:proof17}
\end{align}
It follows from \eqref{eq:proof16} and \eqref{eq:proof17} that the function $\fhat_1(x) - f_1(x)$ takes its minimum value at the point $\xbar$ over the open set $\cX'$. Therefore, the first order optimality condition implies that
\[
\nabla \fhat_1(\xbar) - \nabla f_1(\xbar) = 0,
\]
or equivalently
\begin{align}
\nabla \fhat_1(\xbar) = \nabla f_1(\xbar). \label{eq:proof18}
\end{align}
On the other hand, using the update rule of the SSUM algorithm, we have
\begin{align}
\fhat_1^{r_j}(x^{r_j}) + f_2^{r_j}(x^{r_j}) \leq\fhat_1^{r_j}(x) + f_2^{r_j}(x),\quad \forall\ x \in \cX. \nonumber
\end{align}
Letting $j \rightarrow \infty$ and using \eqref{eq:proof13} and \eqref{eq:proof15} yield
\begin{align}
\fhat_1(\xbar) + f_2(\xbar) \leq\fhat_1(x) + f_2(x),\quad \forall\ x \in \cX. \label{eq:proof19}
\end{align}
Moreover, the directional derivative of $f_2(\cdot)$ exists due to the bounded convergence theorem \cite{FristedtProbabilityBook}. Therefore, \eqref{eq:proof19} implies that
\[
\langle \nabla \fhat_1(\xbar),d\rangle + f_2'(\xbar;d) \geq 0, \quad \forall\ d.
\]
Combining this with \eqref{eq:proof18}, we get
\[
\langle \nabla f_1(\xbar),d\rangle + f_2'(\xbar;d) \geq 0, \quad \forall\ d,
\]
or equivalently
\[
f'(\xbar;d) \geq 0, \quad \forall\ d,
\]
which means that $\xbar$ is a stationary point of $f(\cdot)$.
\end{proof}

\begin{rmk}
In Theorem~\ref{thm:Convergence}, we assume that the set $\cX$ is bounded. It is not hard to see that the result of the theorem still holds even if $\cX$ is unbounded, so long as the iterates lie in a bounded set.
\end{rmk}

\section{Applications: Transceiver Design and Online Dictionary Learning}
%The first application is in the context of precoder design in wireless communication systems.
In this section, we describe two important applications of the SSUM approach.
\subsection{Expected Sum-Rate Maximization for Wireless Networks}
The {\it ergodic/stochastic} transceiver design problem is a long standing problem in the signal processing and communication area, and yet no efficient algorithm has been developed to date which can deliver good practical performance. In contrast, substantial progress has been made in recent years for the {\it deterministic} counterpart of this problem; see \cite{LarssonGame,RazaviyaynGame,BengtssonBook, HoniginterferencePricing,KimGiannakisBF, RazaviyaynWMMSE, RazaviyaynIABF,scutari2013decomposition,scutari2011distributed,hong2012signal}. That said, it is important to point out that most of the proposed methods require the perfect and full channel state information (CSI) of all links -- an assumption that is clearly impractical due to channel aging and channel estimation errors. More importantly, obtaining the full CSI for all links would inevitably require a prohibitively large amount of training overhead and is therefore practically infeasible.

One approach to deal with the channel aging and the full CSI problem is to use the robust optimization methodology. To date, various robust optimization algorithms have been proposed to address this issue \cite{GershmanEldarRobust, BocheRobust, EnbinRobust,TajerRobust, DavidsonShenoudaRobust,li2011coordinated}. However, these methods are typically rather complex compared to their non-robust counterparts. Moreover, they are mostly designed for the worst case scenarios and therefore, due to their nature, are suboptimal when the worst cases happen with small probability.  An alternative approach is to design the transceivers by optimizing the {\it average performance} using a stochastic optimization framework which requires only the \emph{statistical channel knowledge} rather than the full instantaneous CSI. In what follows, we propose a simple iterative algorithm for ergodic/stochastic sum rate maximization problem using the SSUM framework. Unlike the previous approach of \cite{SlockWSRMPartialCSIT} which maximizes a lower bound of the expected weighted sum rate problem, our approach directly maximizes the ergodic sum rate and is guaranteed to converge to the set of stationary points of the ergodic sum rate maximization problem. Furthermore, the proposed algorithm is computationally simple, fully distributed, and has a per-iteration complexity comparable to that of the deterministic counterpart \cite{RazaviyaynWMMSE}.

For concreteness, let us consider a $K$ cell interfering broadcast channel \cite{GroupingRazaviyayn,WMMSETSP}, where each base station $k$, $k~=~1,\ldots,K$ is equipped with $M_{k}$ antennas and serves $L_{k}$ users located in the cell $k$. We denote the $i$-th receiver in the $k$-th cell by user $i_{k}$. Let us also assume that base station $k$ wishes to transmit $d_{i_{k}}$ data streams  $\bs_{i_{k}}\in \mathbb{C}^{d_{i_{k}}}$ to the user $i_{k}$ equipped with $N_{i_{k}}$ antennas. Furthermore, we assume that the data streams are independent Gaussian random variables with $\mathbb{E}[\bs_{i_{k}}\bs_{i_{k}}^{H}]=\bI$, where $\bI$ is the identity matrix of the appropriate size. In order to keep the encoding and decoding procedure simple, we consider linear precoding strategies. In other words, the transmitted signal $\bx_{k}$ at transmitter $k$ is
\begin{align}
\bx_{k}~ =~\sum_{i=1}^{L_{k}}\bV_{i_{k}}\bs_{i_{k}},
\end{align}
where $\bV_{i_{k}}\in \mathbb{C}^{M_{k}\times d_{i_{k}}}$ is the transmit precoding matrix of user $i_k$. Due to the power consumption limitations, we assume that the average transmission power of transmitter $k$ is constrained by a budget $P_{k}$, i.e.,
\begin{align}
%\tr(\mathbb{E}[\bx_{k}\bx_{k}^{H}])=
\sum_{i=1}^{L_{k}}\tr(\bV_{i_{k}}\bV_{i_{k}}^{H})\leq P_{k}.
\end{align}
As the transmitters use the same frequency band, the received signal is the superposition of the signals from different transmitters. Hence the received signal of user $i_k$ can be written as
\begin{align}
\by_{i_{k}}~=~\sum_{j=1}^{K} \bH_{i_{k}j}\bx_{j}+\bn_{i_{k}},
\end{align}
where $\bn_{i_{k}}$ denotes the additive white Gaussian noise with distribution $\mathcal{CN}(0,\sigma_{i_{k}}^{2}\bI)$ and $\bH_{i_{k}j}\in\mathbb{C}^{N_{i_{k}}\times M_{j}}$ is the channel matrix from transmitter $j$ to user $i_k$.
Furthermore, we assume that receiver $i_{k}$ utilizes a linear precoder $\bU_{i_{k}}\in \mathbb{C}^{N_{i_{k}}\times d_{i_{k}}}$ to estimate the data stream $\bs_{i_{k}}$, i.e.,
\begin{align}
{\hat\bs}_{i_{k}}=\bU_{i_{k}}^{H}\by_{i_{k}},
\end{align}
where ${\hat \bs}_{i_{k}}$ is the estimated signal.

Under these assumptions, it is known that the instantaneous achievable rate of user $i_k$ is given by \cite{WMMSETSP,guo2005mutual,sampath2001generalized}:
\begin{align}
R_{i_{k}}(\bU_{i_{k}},\bV,\bH)~=~\log\det(\bE_{i_{k}}^{-1}(\bV,\bU_{i_{k}}))\label{eq: Rate-function},
\end{align}
where
\begin{align}
\bE_{i_{k}}(\bV,\bU_{i_{k}}) ~\triangleq~(\bI{-}\bU_{i_{k}}^H \bH_{i_{k}k} \bV_{i_{k}})(\mathbf{I}{-}\bU_{i_{k}}^H \bH_{i_{k}k} \bV_{i_{k}})^H
~+~\nonumber\\
\sum_{(j,\ell)\neq (k,i)} \bU_{i_{k}}^H {\bH}_{i_{k}j} \bV_{\ell_{j}} \bV_{\ell_{j}}^H {\bH}_{i_{k}j}^H \bU_{i_{k}} + \sigma_{i_{k}}^2 \bU_{i_{k}}^H \bU_{i_{k}}\label{eq: MSE}
\end{align}
denotes the mean square error matrix. Furthermore, it is not hard to check that the optimum receive beamformer $\bU_{i_k}$ which maximizes \eqref{eq: Rate-function} is the MMSE receiver \cite{WMMSETSP,guo2005mutual,sampath2001generalized}:
\begin{equation} \label{eq:MMSE_Receiver}
\bU_{i_k}^{\textrm{mmse}} = \mathbf{J}_{i_k}^{-1} \bH_{i_kk}\bV_{i_k},
\end{equation}
where $\mathbf{J}_{i_k}\triangleq\sum_{j=1}^K \sum_{\ell=1}^{I_j}\bH_{i_kj}\bV_{\ell_j}\bV_{\ell_j}^H\bH_{i_kj}^H+\sigma_{i_k}^2\bI$ is the covariance matrix of the total received signal at receiver~$i_k$.

To maximize the sum of the rates of all users in the network, we need to solve
\begin{align}
\max_{\bV,\bU_{i_{k}}}~&   \sum_{k=1}^{K}\sum_{i=1}^{L_{k}}R_{i_{k}}(\bU_{i_{k}},\bV,\bH)  \nonumber\\
\st~&~\sum_{i=1}^{L_{k}}\tr(\bV_{i_{k}}\bV_{i_{k}}^{H})\leq P_{k},~\forall~k=1,\cdots,K,\nonumber
\end{align}
which requires the knowledge of all instantaneous channel matrices $\bH$ at the transmitters. Due to practical limitations, the exact channel state information (CSI) of all channels, which changes rapidly in time, is typically not available at the base stations. A more realistic assumption is to assume that an approximate CSI is know for a few links, while a statistical model of the CSI is known for the rest of the links. The latter changes more slowly in time and is easier to track.  In such situations, we are naturally led to maximize the expected sum rate of all users, where the expectation is taken over the channel statistics.

Notice that, in practice, the optimum receive beamformer in \eqref{eq:MMSE_Receiver} can be updated by measuring the received signal covariance matrix. Hence even though the complete channel knowledge is not available at the transmitters, the receive beamformers can be optimized according to the instantaneous  channel values by measuring the received signal covariance matrices. Therefore, the expected sum rate maximization problem can be written as
\begin{align}
\max_{\bV}~&\mathbb{E}_{\bH}\left\{   \sum_{k=1}^{K}\sum_{i=1}^{L_{k}}\max_{\bU_{i_{k}}}\left\{R_{i_{k}}(\bU_{i_{k}},\bV,\bH) \right\}\right\}\label{eq: ExpRate1}\\
\st~&~\sum_{i=1}^{L_{k}}\tr(\bV_{i_{k}}\bV_{i_{k}}^{H})\leq P_{k},~\forall~k=1,\cdots,K,\nonumber
\end{align}

%
%Note that the rate of each user $i_{k}$ only depends on its own receive precoder $\bU_{i_{k}}$, as well as all transmit precoders (denoted by $\bV$), as well as channel matrices $\bH_{i_{k}j}$, $j=1,\cdots,K$. Therefore, the receive
% In order to unify the notations in this section, with the ones used in the rest of the paper, we write the problem as a minimization problem.
To be consistent with the rest of the paper, let us rewrite \eqref{eq: ExpRate1} as a minimization problem:
\begin{align}
%\min_{\bV}~&\left\{\sum_{k=1}^{K}\mathbb{E}_{\bH}\{g_{k}(\bV,\bH)\}\right\}\label{eq: ExpRate}\\
\min_{\bV}~&\mathbb{E}_{\bH}\{g_{1}(\bV,\bH)\}\label{eq: ExpRate}\\
\st~&~\sum_{i=1}^{L_{k}}\tr(\bV_{i_{k}}\bV_{i_{k}}^{H})\leq P_{k},~\forall~k=1,\cdots,K,\nonumber
\end{align}
where
\begin{align}
g_{1}(\bV,\bH)~=~\sum_{k=1}^{K}\sum_{i=1}^{L_{k}}\min_{\bU_{i_{k}}}\left\{-R_{i_{k}}(\bU_{i_{k}},\bV,\bH)\right\}.\label{eq: objective}%,~\forall~k=1,\cdots,K.
\end{align}
It can be checked that $g_{1}$ is smooth but non-convex in $\bV$ \cite{WMMSETSP}. In practice, due to other design requirements, one might be interested in adding some convex non-smooth regularizer to the above objective function. For example, the authors of \cite{hong2013joint} added a convex group sparsity promoting regularizer term to the objective for the purpose of joint base station assignment and beamforming optimization. In such a case, since the non-smooth part is convex, the SSUM algorithm is still applicable. For simplicity, we consider only the simple case of  $g_{2}\equiv 0$ in this section.%the objective as it is in \eqref{eq: objective} in this section.

%In general the objective need not to be smooth differentiable (e.g. see \cite{??}???); but in our case it is easy to check that $g_{1}$ is a smooth but non-convex function.
In order to utilize the SSUM algorithm, we need to find a convex tight upper-bound approximation of $g_{1}(\bV,\bH)$. To do so, let us introduce a set of variables $\bP \triangleq (\bW,\bU,\bZ)$, where $\bW_{i_{k}}\in \mathbb{C}^{d_{i_{k}}\times d_{i_{k}}}$ (with $\bW_{i_{k}}\succeq \mathbf{0}$) and $\bZ_{i_{k}}\in \mathbb{C}^{M_{k}\times d_{i_{k}}}$ for any $i=1,\cdots,L_k$ and for all  $k=1,\cdots,K$. Furthermore, define
\begin{align}
{\hat R}_{i_{k}}(\bW_{i_{k}},\bZ_{i_{k}},\bU_{i_{k}},\bV,\bH)~\triangleq~ -\log\det(\bW_{i_{k}})~+~&
\tr(\bW_{i_{k}}\bE_{i_{k}}(\bU_{i_{k}},\bV))~+~\nonumber\\
&\frac{\rho}{2}\|\bV_{i_{k}}-\bZ_{i_{k}}\|^{2}-d_{i_{k}} \label{eq:Rhat},
\end{align}
for some fixed $\rho>0$ and
\begin{align}
{\cal G}_{1}(\bV,\bP,\bH) ~\triangleq~\sum_{k=1}^{K}\sum_{i=1}^{L_{k}}{\hat R}_{i_{k}}(\bW_{i_{k}},\bZ_{i_{k}},\bU_{i_{k}},\bV,\bH).
\end{align}
Using the first order optimality condition, we can check that
\begin{align}
g_{1}(\bV,\bH)~=~\min_{\bP}~&~ {\cal G}_{1}(\bV,\bP,\bH). \nonumber
%\sum_{k=1}^{K}g_{k}(\bV,\bH)~=~\min_{\bW,\bU,\bZ}&\sum_{k=1}^{K}{\hat g}_{k}(\bW_{k},\bZ_{k},\bU_{k},\bV,\bH)\label{eq: WUZupdate}\\
%\st~&~\bW_{i_{k}}\succeq \mathbf{0},~\forall~k=1,\cdots,K~\mbox{and}~i=1,\cdots,L_{k}.\nonumber
\end{align}
Now, let us define
\[
\ghat_1(\bV,\bar\bV,\bH) = {\cal G}_1(\bV , {\cal P}(\bar\bV,\bH),\bH),
\]
where
\[
{\cal P} (\bar\bV,\bH) = \arg\min_\bP \; {\cal G}_1 (\bar\bV,\bP,\bH).
\]
Clearly,  we have
\[
g_1(\bar\bV,\bH) = \min_\bP\; {\cal G}_1(\bar\bV,\bP,\bH) = {\cal G}_1(\bar\bV,{\cal P}(\bar\bV,\bH),\bH) = \ghat_1(\bar\bV,\bar\bV,\bH),
\]
and
\[
g_1(\bV,\bH) = \min_\bP\; {\cal G}_1(\bV,\bP,\bH) \leq {\cal G}_1 (\bV,{\cal P}(\bar\bV,\bH),\bH) = \ghat_1(\bV,\bar\bV,\bH).
\]
Furthermore, $\ghat_1(\bV,\bar\bV,\bH)$ is strongly convex in $\bV$ with parameter $\rho$ due to the quadratic term in \eqref{eq:Rhat}. Hence $\ghat_1(\bV,\bar\bV,\bH)$ satisfies the assumptions A1-A3. In addition, if the channels lie in a bounded subset with probability one and the noise power $\sigma_{i_k}^2$ is strictly positive for all users, then it can be checked that $g_1(\bV,\bH)$  and $\ghat_1(\bV,\bar\bV,\bH)$ satisfy the assumptions B1-B6. Consequently, we can apply the SSUM algorithm to solve \eqref{eq: ExpRate}.

Define $\bH^r$ to be the $r$-th channel realization. Let us further define
\begin{equation}
\label{eq:Pidef}
\bP^r \triangleq \arg\min_{\bP} \; {\cal G}_1(\bV^{r-1},\bP,\bH^r),
\end{equation}
where $\bV^{r-1}$ denotes the transmit beamformer at iteration $r-1$. Notice that $\bP^r$ is well defined since the optimizer of \eqref{eq:Pidef} is unique. With these definitions, the update rule of the SSUM algorithm becomes
\begin{equation}
\nonumber
\begin{split}
\bV^{r} \leftarrow \arg\min_{\bV} \;& \frac{1}{r} \sum_{i=1}^r \ghat_1(\bV,\bV^{i-1},\bH^i)\\
\st \quad & \sum_{i=1}^{L_k} \tr(\bV_{i_k}\bV_{i_k}^H) \leq P_k, \quad\forall\ k
\end{split}
\end{equation}
or equivalently
\begin{equation}
\label{eq:Vupdateproblem}
\begin{split}
\bV^{r} \leftarrow \arg\min \;& \frac{1}{r} \sum_{i=1}^r {\cal G}_1(\bV,\bP^i,\bH^i)\\
\st \quad & \sum_{i=1}^{L_k} \tr(\bV_{i_k}\bV_{i_k}^H) \leq P_k, \quad\forall\ k.
\end{split}
\end{equation}
In order to make sure that the SSUM algorithm can efficiently solve \eqref{eq: ExpRate}, we need to confirm that the update rules of the variables $\bV$ and $\bP$ can be performed in a computationally efficient manner in \eqref{eq:Pidef} and \eqref{eq:Vupdateproblem}. Checking the first order optimality condition of \eqref{eq:Pidef}, it can be shown that the updates of the variable $\bP = (\bW,\bU,\bZ)$ can be done in closed form; see Table~\ref{table: SWMMSE}. Moreover, for updating the variable $\bV$, we need to solve a simple quadratic problem in \eqref{eq:Vupdateproblem}. Using the Lagrange multipliers, the update rule of the variable $\bV$ can be performed using a one dimensional search method over the Lagrange multiplier \cite{WMMSETSP}.  Table \ref{table: SWMMSE} summarizes the SSUM algorithm applied to the expected sum rate maximization problem; we name this algorithm as stochastic weighted mean square error minimizations (stochastic WMMSE) algorithm. Notice that although in the SSUM algorithm the update of the precoder $\bV_{i_k}$ depends on all the past realizations, Table \ref{table: SWMMSE} shows that all the required information (for updating $\bV_{i_{k}}$) can be encoded into two matrices $\bA_{i_{k}}$ and $\mathbf{B}_{i_{k}}$, which are updated recursively.
\begin{rmk}
Similar to the deterministic WMMSE algorithm \cite{WMMSETSP} which works for the general $\alpha$-fairness utility functions, the Stochastic WMMSE algorithm can also be extended to maximize the expected sum of such utility functions; see \cite{WMMSETSP} for more details on the derivations of the respective update rules.
\end{rmk}

\begin{table}
\caption{SSUM algorithm applied to expected sum rate maximization}\label{table: SWMMSE}
\begin{center}
\begin{tabular}{|l|}
\hline \\[0pt]
 Initialize $\bV$ randomly such that $\sum_{i=1}^{L_{k}}\tr\left(\bV_{i_{k}}\bV_{i_{k}}^H\right) =P_k,\;\forall\; k$ and set $r=0$.\\
\textbf{repeat}\\
~~~~~$r\leftarrow r+1$\\
~~~~~Obtain the new channel estimate/realization $\bH^r$\\
~~~~~$\bU_{i_{k}} \leftarrow \left(\sum_{j=1}^{K}\sum_{l=1}^{L_{j}} \bH_{i_{k} j}^r\bV_{l_{j}} \bV_{l_{j}}^H (\bH_{i_{k} j}^r)^H+\sigma_{i_{k}}^2\bI\right)^{-1}\bH_{i_{k}k}^r\bV_{i_{k}},\; \forall\; k,~ i=1,\cdots,L_{k}$\\
~~~~~$\bW_{i_{k}} \leftarrow \left(\bI-\bU_{i_{k}}^H\bH_{i_{k}k}^r\bV_{i_{k}}\right)^{-1}, \;\forall\; k,~i=1,\cdots,L_{k}$\\
~~~~~$\bZ_{i_{k}} \leftarrow \bV_{i_{k}},\;\forall\; k,~i=1,\cdots,L_{k}$\\
~~~~~$\mathbf{A}_{i_{k}} \leftarrow \mathbf{A}_{i_{k}} + \rho\bI+ \sum_{j=1}^K\sum_{l=1}^{L_{j}}(\bH_{l_{j}k}^r)^H \bU_{l_{j}}\bW_{l_{j}}\bU_{l_{j}}^H \bH_{l_{j}k}^r,\;\forall\; k,~i=1,\cdots,L_{k} $\\
~~~~~$\mathbf{B}_{i_{k}} \leftarrow \mathbf{B}_{i_{k}} + \rho \bZ_{i_{k}} + (\bH_{i_{k}k}^r)^H \bU_{i_{k}} \bW_{i_{k}},\;\forall\; k,~i=1,\cdots,L_{k}$\\
~~~~~$\bV_{i_{k}} \leftarrow (\mathbf{A}_{i_{k}}+ \mu_k^* \bI)^{-1} \mathbf{B}_{i_{k}},\;\forall\; k,~i=1,\cdots,L_{k}$, where $\mu^{*}_{k}$ is the optimal Lagrange\\
~~~~~multiplier for the constraint $\sum_{i=1}^{L_{k}}\tr(\bV_{k}\bV_{k}^{H})\leq P_{k}$ which can be found using bisection.\\
%~~~~~$\mathcal{X}~=~\{\bV~|~\tr(\bV_{k}\bV_{k}^{H})\leq P_{k}, ~k=1,\cdots,K\}$.\\
\textbf{until} some convergence criterion is met.\\[10pt]
\hline
\end{tabular}
\end{center}
\end{table}

\subsection{Numerical Experiments}
In this section we numerically evaluate the performance of the SSUM algorithm for maximizing the expected sum-rate in a wireless network. In our simulations, we consider $K=57$ base stations each equipped with $M=4$ antennas and serve a  two antenna user in its own cell. The path loss and the power budget of the transmitters are generated using the 3GPP (TR 36.814) evaluation methodology \cite{3gpp}. We assume that partial channel state information is available for some of the links. In particular, each user estimates only its direct link, plus the interfering links whose powers are at most  $\eta$ (dB) below its direct channel power. For these estimated links, we assume a channel estimation error model in the form of $\hat{h}=h+z$, where $h$ is the actual channel; $\hat{h}$ is the estimated channel, and $z$ is the estimation error. Given a MMSE channel estimate $\hat{h}$, we can determine the distribution of ${h}$ as $\mathcal{CN} (\hat{h}, \frac{\sigma_{l}^{2}}{1+\gamma {\rm SNR}})$ where $\gamma$ is the effective signal to noise ratio (SNR) coefficient depending on the system parameters (e.g. the number of pilot symbols used for channel estimation) and $\sigma_{l}$ is the path loss. Moreover, for the channels which are not estimated, we assume the availability of estimates of the path loss $\sigma_{l}$ and use them to construct statistical models (Rayleigh {\color{black}fading} is considered on top of the path loss).

We compare the performance of four different algorithms: {\it one sample WMMSE}, {\it mean WMMSE}, {\it stochastic gradient}, and  {\it Stochastic WMMSE}. In ``one sample WMMSE" and ``mean WMMSE", we apply the WMMSE algorithm \cite{WMMSETSP} on one realization of all channels and mean channel matrices respectively. In the SG method, we apply the stochastic gradient method with diminishing step size rule to the ergodic sum rate maximization problem; see Section \ref{sec: SG}. Figure \ref{fig: Sim1} {\color{black}shows our simulation results when each user only estimates about 3\% of its channels, while the others are generated synthetically according to the channel distributions}. The expected sum rate in each iteration is approximated in this figure by a Monte-Carlo averaging over 500 independent channel realizations. As can be seen from Figure \ref{fig: Sim1}, the Stochastic WMMSE algorithm significantly outperforms the rest of the algorithms. Although the stochastic gradient algorithm with diminishing step size (of order $1/r$) is guaranteed to converge to a stationary solution, its convergence speed is sensitive to the step size selection and is usually slow. We have also experimented the SG method with different constant step sizes in our numerical simulations, but they typically led to divergence.

Figure \ref{fig: Sim2} illustrates the performance of the algorithms for $\eta =12$ whereby about 6\% of the channels are estimated.

%is clear that Stochastic WMMSE algorithm outperforms the other algorithms. In contrast with Figure \ref{fig: Sim1}, in Figure \ref{fig: Sim2} WMMSE on one sample outperforms the WMMSE algorithm on mean value (why?????).

\begin{figure}[H]
  \caption{Expected sum rate vs. iteration number. We set $\eta = 6$, $\gamma = 1$ and consequently only 3\% of the channel matrices are estimated, while the rest are generated by their path loss coefficients plus Rayleigh fading. The signal to noise ratio is set ${\rm SNR}= 15$ (dB).}\label{fig: Sim1}
  \centering
    \includegraphics[width=0.7\textwidth]{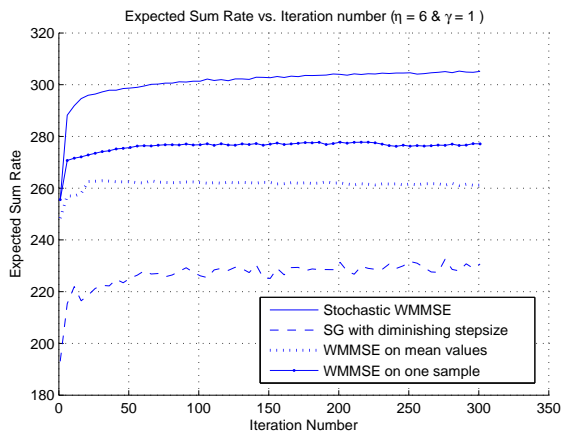}
\end{figure}

\begin{figure}[H]
  \caption{Expected sum rate vs. iteration number. We set $\eta = 12$, $\gamma = 1$ and consequently only 6\% of the channel matrices are estimated, while the rest are represented by their path loss coefficients plus Rayleigh fading. The signal to noise ratio is set ${\rm SNR}= 15$ (dB).}\label{fig: Sim2}
  \centering
    \includegraphics[width=0.7\textwidth]{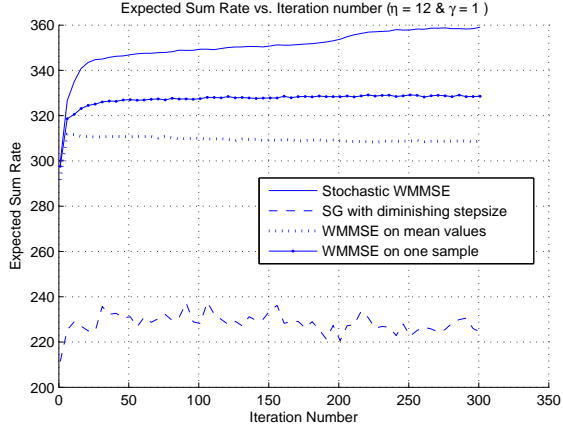}
\end{figure}

\subsection{Online Dictionary Learning}
Consider the classical dictionary learning problem: Given a random signal $y \in \mathbb{R}^n$ drawn from a distribution $P_{Y}(y)$, we are interested in finding a dictionary $D \in \mathbb{R}^{n\times k}$ so that the empirical cost function
\[
f(D) \triangleq \mathbb{E}_y \left[g(D,y)\right]
\]
is minimized over the feasible set $\mathcal{D}$; see \cite{aharon2005k,lewicki2000learning,Mairal2010SparseDL}. The loss function $g(D,y)$ measures the fitting error of the dictionary $D$ to the signal $y$. Most of the classical and modern loss functions can be represented in the form of
\begin{equation}
\label{eq:tempDL1}
g(D,y) \triangleq \min_{\alpha \in \mathcal{A}} \; h(\alpha,D,y),
\end{equation}
where $\mathcal{A} \subseteq \mathbb{R}^k$ and $h(\alpha,D,y)$ is a convex function in $\alpha$ and $D$ separately. For example, by choosing $h(\alpha,D,y) = \frac{1}{2} \|y-D\alpha\|_2^2 + \lambda \|\alpha\|_1$, we obtain the sparse dictionary learning problem; see \cite{Mairal2010SparseDL}.\\

In order to apply the SSUM framework to the online dictionary learning problem, we need to choose an appropriate approximation function $\ghat(\cdot)$. To this end, let us define
\[
\ghat(D,\bar{D},y) = h(\bar\alpha ,D,y) + \frac{\gamma}{2} \|D- \bar{D}\|_2^2,
\]
where
\[
\bar\alpha  \triangleq \arg\min_{\alpha \in \mathcal{A}} h(\alpha,\bar{D},y).
\]
Clearly, we have
\[
\ghat(\bar{D},\bar{D},y) = h(\bar\alpha , \bar{D},y) = \min_{\alpha\in \mathcal{A}} h(\alpha,\bar{D},y) = g(\bar{D},y),
\]
and
\[
\ghat(D,\bar{D},y) \geq h(\bar{\alpha},D,y) \geq g(D,y).
\]
Furthermore, if we assume that the solution of \eqref{eq:tempDL1} is unique, the function $g(\cdot)$ is smooth due to Danskin's Theorem \cite{Bertsekas_Book_Nonlinear}. Moreover, the function $\ghat(D,\bar{D},y)$ is strongly convex in $D$. Therefore, the assumptions A1-A3 are satisfied. In addition, if we assume that the feasible set $\mathcal{D}$ is bounded and the signal vector $y$ lies in a bounded set $\mathcal{Y}$, the assumptions B1-B6 are satisfied as well. Hence the SSUM algorithm is applicable to the online dictionary learning problem.
\begin{rmk}
Choosing $h(\alpha,D,y) = \frac{1}{2} \|y-D\alpha\|_2^2 + \lambda \|\alpha\|_1$ and $\gamma =0$ leads to the online sparse dictionary learning algorithm in \cite{Mairal2010SparseDL}. Notice that the authors of \cite{Mairal2010SparseDL} had to assume the uniform strong convxity of $\frac{1}{2} \|y-D\alpha\|_2^2$ for all $\alpha\in\mathcal{A}$ since they did not consider the quadratic proximal term $\gamma \|D-{\bar D}\|^2$.
\end{rmk}

\section{Stochastic (Sub-)Gradient Method and its Extensions}\label{sec: SG}
In this section, we show that the classical SG method, the incremental gradient method and the stochastic sub-gradient method are special cases of the SSUM method. We also present an extension of these classical methods using the SSUM framework.

To describe the SG method, let us consider a special (unconstrained smooth) case of the optimization problem~\eqref{eq:OriginalProblem}, where  $g_{2}\equiv 0$ and $\mathcal{X} = \mathbb{R}^n$. One of the popular algorithms for solving this problem is the stochastic gradient (also known as stochastic approximation) method. At each iteration $r$ of the stochastic gradient (SG) algorithm, a new realization $\xi^{r}$ is obtained and $x$ is updated based on the following simple rule \cite{shapiro2009lectures,robbins1951stochastic,kiefer1952stochastic,nemirovski2009robust}:
\begin{align}
x^{r}~\leftarrow~x^{r-1}-\gamma^{r}\nabla_{x}g_{1}(x^{r-1},\xi^{r}).\label{eq: SGupdate}
\end{align}
Here $\gamma^{r}$ is the step size at iteration $r$. Due to its simple update rule, the SG algorithm has been widely  used in various applications such as data classification \cite{amari1967theory,Wijnhoven2010b}, training multi-layer neural networks \cite{grippo2000,mangasarian1994,luo1991LMS,luo1994backpropagation}, the expected risk minimization \cite{bottou1998online}, solving least squares in statistics \cite{bertsekas1997leastsquares}, and distributed inference in sensor networks \cite{Bertsekas_Book_Distr,tsitsiklis1986,bertsekas1983distributed}. Also the convergence of the SG algorithm is well-studied in the literature; see, e.g., \cite{ermol1998stochastic,nemirovski2009robust,shapiro2009lectures}.

The popular incremental gradient method \cite{luo1991LMS,luo1994backpropagation,bertsekas2011incremental,bertsekas1997leastsquares,bottou1998online} can be viewed as a special case of the SG method where the set $\Xi$ is finite. In the  incremental gradient methods, a large but finite set of samples $\Xi$ is available and the objective is to minimize the empirical expectation
\begin{align}
{ \mathbb{\hat E}}\{g(x,\xi)\}~=~\frac{1}{|\Xi|}\sum_{\xi\in\Xi}g(x,\xi).\label{eq: EmpiricalExp}
\end{align}
At each iteration $r$ of the incremental gradient method (with random updating order), a new realization  $\xi^{r}\in\Xi$ is chosen randomly and uniformly, and then \eqref{eq: SGupdate} is used to update $x$. This is precisely the SG algorithm applied to the minimization of \eqref{eq: EmpiricalExp}. In contrast to the batch gradient algorithm which requires computing $\sum_{\xi\in\Xi}\nabla_{x}g(x,\xi)$, the updates of the incremental gradient algorithm are computationally cheaper, especially if $|\Xi|$ is very large.

In general, the convergence of the SG method depends on the proper choice of the step size $\gamma^{r}$. It is known that for the constant step size rule, the SG algorithm might diverge even for a convex objective function; see \cite{luo1991LMS} for an example. There are many variants of the SG algorithm with different step size rules \cite{tseng1998incremental}. In the following, we introduce a special form of the SSUM algorithm that can be interpreted as the SG algorithm with diminishing step sizes. Let us define
\begin{align}
\ghat_{1}(x,y,\xi)=g_{1}(y,\xi)+\langle\nabla g_1(y,\xi),x-y\rangle+\frac{\alpha}{2}\|x-y\|^{2}, \label{eq:SGghat}
\end{align}
where $\alpha$ is a function of $y$ and is chosen so that $\ghat_{1}(x,y,\xi)\geq g_{1}(x,\xi)$. One simple choice is $\alpha^{r} = L$, where $L$ is the Lipschitz constant of $\nabla_{x}g_{1}(x,\xi)$. Choosing  $\ghat_{1}$ in this way, the assumptions A1-A3 are clearly satisfied. Moreover, the update rule of the SSUM algorithm becomes
\begin{align}
x^{r}\;\leftarrow\arg\min_{x}~\frac{1}{r}\sum_{i=1}^{r}\ghat_{1}(x,x^{i-1},\xi^{i}).\label{eq: SSUM-SG}
\end{align}
Checking the first order optimality condition of \eqref{eq: SSUM-SG}, we obtain
\begin{align}
x^{r}~\leftarrow~\frac{1}{\sum_{i=1}^{r}\alpha^{i}}\left(\sum_{i=1}^{r}(\alpha^{i}x^{i-1}-\nabla_{x}g_{1}(x^{i-1},\xi^{i}))\right). \label{eq: SSUM-SG1}
\end{align}
Rewriting \eqref{eq: SSUM-SG1} in a recursive form yields
\begin{align}
x^{r}~\leftarrow~x^{r-1}-\frac{1}{\sum_{i=1}^{r}\alpha^{i}}\nabla_{x}g_{1}(x^{r-1},\xi^{r}),\label{eq: SSUM-SG2}
\end{align}
which can be interpreted as the stochastic gradient method \eqref{eq: SGupdate} with $\gamma^{r}=\frac{1}{\sum_{i=1}^{r}\alpha_{i}}$. Notice that the simple constant choice of $\alpha^{i}=L$ yields $\gamma^{r}=\frac{1}{rL}$, which gives the most popular diminishing step size rule of the SG method.

\begin{rmk}
When $\cX$ is bounded and using the approximation function in \eqref{eq:SGghat}, we see that the SSUM algorithm steps become
\begin{align}
z^r &\displaystyle = \frac{1}{\sum_{i=1}^{r} \alpha^i} \left(\sum_{i=1}^{r-1} \alpha^iz^{r-1} + {\alpha^r}x^{r-1} -  \nabla_x g_1(x^{r-1},\xi^r)\right), \nonumber\\
x^r &= \Pi_\mathcal{X} (z^r), \nonumber
\end{align}
where $\Pi_\mathcal{X} (\cdot)$ signifies the projection operator to the constraint set $\mathcal{X}$. Notice that this update rule is different from the classical SG method as it requires generating the auxiliary iterates $\{z^r\}$ which may not lie in the feasible set $\cX$.
\end{rmk}

It is also worth noting that in the presence of the non-smooth part of the objective function, the SSUM algorithm becomes different from the classical stochastic sub-gradient method \cite{shapiro2009lectures,robbins1951stochastic,kiefer1952stochastic,nemirovski2009robust}. To illustrate the ideas, let us consider a simple deterministic nonsmooth function $g_2(x)$ to be added to the objective function. The resulting optimization problem becomes
\[
\min_x \quad \mathbb{E}\left[g_1(x,\xi)\right] + g_2(x).
\]
Using the approximation introduced in \eqref{eq:SGghat}, the SSUM update rule can be written as
\begin{equation}
\label{eq:tempSG1}
x^r \leftarrow \arg \min_x \quad \frac{1}{r} \sum_{i=1}^r \ghat_1(x,x^{i-1},\xi^i) + g_2(x).
\end{equation}
Although this update rule is similar to the (regularized) dual averaging method \cite{nesterov2009primal,xiao2010dual} for convex problems,  its convergence is guaranteed even for the nonconvex nonsmooth objective function under the assumptions of Theorem~\ref{thm:Convergence}. Moreover, similar to the (regularized) dual averaging method, the steps of the SSUM algorithm are computationally cheap for some special nonsmooth functions. As an example, let us consider the special non-smooth function $g_2(x) \triangleq \lambda \|x\|_1$. Setting $\alpha^r = L$, the first order optimality condition of \eqref{eq:tempSG1} yields the following update rule:
\begin{equation}
\label{eq:tempSGnonsmooth}
\begin{split}
z^{r+1} & \leftarrow \frac{rz^r + x^r - \frac{1}{L}\nabla g_1 (x^r,\xi^{r+1})}{r+1},\\
x^{r+1} & \leftarrow {\rm shrink}_{\frac{\lambda}{L}} (z^{r+1}),
\end{split}
\end{equation}
where $\{z^{r+1}\}_{r=1}^{\infty}$ is an auxiliary variable sequence and ${\rm shrink}_{\tau}(z)$ is the soft shrinkage operator defined as
\begin{equation}
\nonumber
{\rm shrink}_\tau(z) = \left\{
\begin{array}{cl}
z-\tau &\quad  z\geq \tau \\
0 &\quad \tau \geq z\geq -\tau \\
z+\tau &\quad z\leq -\tau \\
\end{array}
\right.  .
\end{equation}
Notice that the algorithm obtained in \eqref{eq:tempSGnonsmooth} is different from the existing stochastic subgradient algorithm and the stochastic proximal gradient algorithm \cite{shalev2011stochastic,bertsekas2011incremental}; furthermore, if the conditions in Theorem~\ref{thm:Convergence} is satisfied, its convergence is guaranteed even for nonconvex objective functions.\\

\noindent{\bf Acknowledgment:} The authors are grateful to Maury Bramson of the University of Minnesota for valuable discussions.

\quad\\
\vspace{0.2cm}

\noindent \large{\bf Appendix} \vspace{0.3cm}

\noindent {\bf Proof of Lemma~\ref{lemma:mainlemma}}
%\section{Proof of Lemma~\ref{lemma:mainlemma}}
\vspace{0.1cm}

\normalsize
\noindent The proof requires the use of quasi martingale convergence theorem  \cite{Fisk1965Quasimtgle}, much like the convergence proof of online learning algorithms \cite[Proposition 3]{Mairal2010SparseDL}. In particular, we will show that the sequence $\{\fhat^r(x^r)\}_{r=1}^{\infty}$ converges almost surely. %To this end, the quasi martingale convergence theorem \cite{Fisk1965Quasimtgle} will be utilized; see also \cite[Theorem 6]{Mairal2010SparseDL}.
Notice that
\begin{align}
&\!\!\fhat^{r+1}(x^{r+1}) - \fhat^r(x^r) \nonumber \\
&=  \fhat^{r+1}(x^{r+1}) - \fhat^{r+1}(x^r) + \fhat^{r+1}(x^r)- \fhat^r(x^r) \nonumber\\
&= \fhat^{r+1}(x^{r+1}) - \fhat^{r+1}(x^r) + \frac{1}{r+1} \sum_{i=1}^{r+1} \ghat(x^r,x^{i-1},\xi^i) - \frac{1}{r} \sum_{i=1}^r \ghat(x^r,x^{i-1},\xi^i) \nonumber\\
&= \fhat^{r+1}(x^{r+1}) - \fhat^{r+1}(x^r) - \frac{1}{r(r+1)} \sum_{i=1}^r \ghat(x^r,x^{i-1},\xi^i) + \frac{1}{r+1} \ghat(x^r,x^r,\xi^{r+1}) \nonumber\\
&= \fhat^{r+1}(x^{r+1}) - \fhat^{r+1}(x^r) - \frac{\fhat^r(x^r)}{r+1} + \frac{1}{r+1} g(x^r,\xi^{r+1})\nonumber\\
&\leq \frac{-\fhat^r(x^r) + g(x^r,\xi^{r+1})}{r+1},\nonumber
\end{align}
where the last equality is due to the assumption A1 and the inequality is due to the update rule of the SSUM algorithm. Taking the expectation with respect to the natural history yields
\begin{align}
\mathbb{E}\left[\fhat^{r+1}(x^{r+1}) - \fhat^r(x^r)\bigg|\mathcal{F}^r\right] & \leq
\mathbb{E}\left[\frac{-\fhat^r(x^r) + g(x^r,\xi^{r+1})}{r+1}\bigg|\mathcal{F}^r\right]  \nonumber\\
& = \frac{-\fhat^r(x^r)}{r+1} + \frac{f(x^r)}{r+1} \nonumber\\
& = \frac{-\fhat^r(x^r) + f^r(x^r)}{r+1} + \frac{f(x^r) - f^r(x^r)}{r+1} \label{eq:Lemma19}\\
&\leq \frac{f(x^r) - f^r(x^r)}{r+1} \label{eq:Lemma191}\\
&\leq \frac{\|f-f^r\|_\infty}{r+1}, \label{eq:Lemma20}
\end{align}
where \eqref{eq:Lemma191} is due to the assumption A2 and \eqref{eq:Lemma20} follows from the definition of $\|\cdot\|_\infty$. On the other hand, the Donsker theorem (see \cite[Lemma 7]{Mairal2010SparseDL} and \cite[Chapter 19]{VandervaartDonsker2000}) implies that there exists a constant $k$ such that
\begin{equation}
\label{eq:Lemma21}
\mathbb{E}\left[\|f-f^r\|_\infty\right] \leq \frac{k}{\sqrt{r}}.
\end{equation}
Combining \eqref{eq:Lemma20} and \eqref{eq:Lemma21} yields
\begin{equation}
\label{eq:Lemma22}
\mathbb{E} \left[\left(\mathbb{E}\left[\fhat^{r+1}(x^{r+1}) - \fhat^r(x^r)\bigg|\mathcal{F}^r\right]\right)_+\right] \leq \frac{k}{r^{3/2}},
\end{equation}
where $(a)_+ \triangleq \max\{0,a\}$ is the projection to the non-negative orthant. Summing \eqref{eq:Lemma22} over $r$, we obtain
\begin{equation}
\label{eq:Lemma23}
\sum_{r=1}^\infty\mathbb{E} \left[\left(\mathbb{E}\left[\fhat^{r+1}(x^{r+1}) - \fhat^r(x^r)\bigg|\mathcal{F}^r\right]\right)_+\right] \leq M < \infty,
\end{equation}
where $M \triangleq \sum_{r=1}^\infty \frac{k}{r^{3/2}}$. The equation~\eqref{eq:Lemma23} combined with the quasi-martingale convergence theorem (see \cite{Fisk1965Quasimtgle} and \cite[Theorem 6]{Mairal2010SparseDL}) implies that the stochastic process$\{\fhat^r(x^r) + \bar{g}\}_{r=1}^{\infty}$ is a quasi-martingale with respect to the natural history $\{\mathcal{F}^r\}_{r=1}^{\infty}$ and $\fhat^r(x^r)$ converges. Moreover, we have
\begin{equation}
\label{eq:Lemma24}
\sum_{r=1}^\infty\bigg|\mathbb{E}\left[\fhat^{r+1}(x^{r+1}) - \fhat^r(x^r)\big|\mathcal{F}^r\right]\bigg|  < \infty,\quad {\rm almost\;surely.}
\end{equation}
Next we use \eqref{eq:Lemma24} to show that  $\sum_{r=1}^\infty \frac{\fhat^r(x^r) - f^r(x^r)}{r+1} <\infty,\; {\rm almost\;surely}$. To this end, let us rewrite \eqref{eq:Lemma19} as
\begin{equation}
\label{eq:Lemma25}
\frac{\fhat^r(x^r) - f^r(x^r)}{r+1}\leq \mathbb{E}\left[-\fhat^{r+1}(x^{r+1}) + \fhat^r(x^r)\bigg|\mathcal{F}^r\right] + \frac{f(x^r) - f^r(x^r)}{r+1}.
\end{equation}
Using the fact that $\fhat^r(x^r) \geq f^r(x^r),\;\forall\ r$ and summing \eqref{eq:Lemma25} over all values of $r$, we have
\begin{equation}
\label{eq:Lemma26}
\begin{split}
0 &\leq\sum_{r=1}^\infty\frac{\fhat^r(x^r) - f^r(x^r)}{r+1}\\
& \leq \sum_{r=1}^\infty \bigg|\mathbb{E}\left[-\fhat^{r+1}(x^{r+1}) + \fhat^r(x^r)\big|\mathcal{F}^r\right]\bigg| + \sum_{r=1}^\infty\frac{\|f-f^r\|_\infty}{r+1}.
\end{split}
\end{equation}
Notice that the first term in the right hand side is finite due to \eqref{eq:Lemma24}. Hence in order to show $\sum_{r=1}^\infty\frac{\fhat^r(x^r) - f^r(x^r)}{r+1}<\infty,\;{\rm almost\;surely}$, it suffices to show that $\sum_{r=1}^\infty\frac{\|f-f^r\|_\infty}{r+1} < \infty,\;{\rm almost\;surely}$. To show this, we use the Hewitt-Savage zero-one law; see \cite[Theorem 11.3]{HewittSavage01law} and \cite[Chapter 12, Theorem 19]{FristedtProbabilityBook}. Let us define the event
$$\mathcal{A} \triangleq \left\{(\xi^1,\xi^2,\ldots) \mid \sum_{r=1}^\infty \frac{\|f^r-f\|_\infty}{r+1}<\infty\right\}.$$
It can be checked that the event $\mathcal{A}$ is permutable, i.e., any finite permutation of each element of $\mathcal{A}$ is inside $\mathcal{A}$; see \cite[Theorem 11.3]{HewittSavage01law} and \cite[Chapter 12, Theorem 19]{FristedtProbabilityBook}. Therefore, due to the Hewitt-Savage zero-one law \cite{HewittSavage01law}, probability of the event $\mathcal{A}$ is either zero or one. On the other hand, it follows from \eqref{eq:Lemma21} that there exists $M'>0$ such that

\begin{equation}
\label{eq:Lemma27}
\mathbb{E}\left[\sum_{r=1}^\infty \frac{\|f^r-f\|_\infty}{r+1}\right] \leq M' < \infty.
\end{equation}
Using Markov's inequality, \eqref{eq:Lemma27} implies that
\[
Pr\left(\sum_{r=1}^\infty \frac{\|f^r-f\|_\infty}{r+1} > 2M'\right) \leq \frac{1}{2}.
\]
Hence combining this result with the result of the Hewitt-Savage zero-one law, we obtain $Pr(\mathcal{A}) =1$; or equivalently
\begin{equation}
\label{eq:Lemma28}
\sum_{r=1}^\infty \frac{\|f^r-f\|_\infty}{r+1} < \infty, \quad {\rm almost \; surely}.
\end{equation}
As a result of \eqref{eq:Lemma26} and \eqref{eq:Lemma28}, we have
\begin{equation}
\label{eq:Lemma29}
0 \leq \sum_{r=1}^\infty \frac{\fhat^r(x^r) - f^r(x^r)}{r+1} < \infty, \quad {\rm almost \; surely}.
\end{equation}
On the other hand, it follows from the  triangle inequality that
\begin{eqnarray}
&&\!\!\!\!\!\!\bigg|\fhat^{r+1}(x^{r+1}) - f^{r+1}(x^{r+1}) - \fhat^r(x^r) + f^r(x^r)\bigg|\nonumber\\
&&\leq \bigg| \fhat^{r+1}(x^{r+1})- \fhat^r(x^r)\bigg| + \bigg| f^{r+1}(x^{r+1})- f^r(x^r)\bigg|
\label{eq:Lemma30}
\end{eqnarray}
and
\begin{align}
&\bigg| \fhat^{r+1}(x^{r+1})- \fhat^r(x^r)\bigg| \nonumber\\
&\leq \!\bigg| \fhat^{r+1}(x^{r+1})- \fhat^{r+1}(x^r)\bigg| + \bigg| \fhat^{r+1}(x^r)\!-\! \fhat^r(x^r)\bigg| \nonumber\\
& \leq \!\kappa\|x^{r+1} - x^r\|\! + \!\left|\frac{1}{r+1} \sum_{i=1}^{r+1} \ghat(x^r,x^{i-1},\xi^i)\! -\! \frac{1}{r} \sum_{i=1}^{r} \ghat(x^r,x^{i-1},\xi^i) \right| \label{eq:Lemma301} \\
& \leq  \!\kappa\|x^{r+1} - x^r\|\! +\!\left| \frac{1}{r(r+1)} \sum_{i=1}^{r} \ghat(x^r,x^{i-1},\xi^i)\! +\! \frac{\ghat(x^r,x^r,\xi^{r+1})}{r+1}\right|  \nonumber\\
& \leq \!\kappa\|x^{r+1} - x^r\| + \frac{2\bar{g}}{r+1} \label{eq:Lemma302} \\
& = \!\mathcal{O}\left (\frac{1}{r}\right),\label{eq:Lemma31}
\end{align}
where \eqref{eq:Lemma301} is due to the assumption B3 (with $\kappa=(K+K') $); \eqref{eq:Lemma302} follows from the assumption B6, and \eqref{eq:Lemma31} will be shown in Lemma~\ref{lemma:O1overR}. Similarly, one can show that

\begin{equation}
\label{eq:Lemma32}
|f^{r+1}(x^{r+1}) - f^r(x^r)| = \mathcal{O}\left(\frac{1}{r}\right).
\end{equation}
It follows from \eqref{eq:Lemma30}, \eqref{eq:Lemma31}, and \eqref{eq:Lemma32} that
\begin{equation}
\label{eq:Lemma33}
\bigg|\fhat^{r+1}(x^{r+1}) - f^{r+1}(x^{r+1}) - \fhat^r(x^r) + f^r(x^r)\bigg| = \mathcal{O}\left(\frac{1}{r}\right).
\end{equation}
Let us fix a random realization $\{\xi^r\}_{r=1}^\infty$ in the set of probability one for which \eqref{eq:Lemma29} and \eqref{eq:Lemma33} hold. Define
\[
\alpha^r \triangleq \fhat^r(x^r) - f^r(x^r).
\]
Clearly, $\alpha^r \geq 0$ and $\sum_r \frac{\alpha^r}{r}<\infty$ due to \eqref{eq:Lemma29}. Moreover, it follows from \eqref{eq:Lemma33} that $|\alpha^{r+1}-\alpha^r| < \frac{\tau}{r}$ for some constant $\tau>0$. Hence Lemma~\ref{lemma:sequencelimitzero} implies that
\[
\lim_{r \rightarrow \infty} \; \alpha^r = 0,
\]
which is the desired result.

\begin{lem}
\label{lemma:O1overR}
$\|x^{r+1} - x^r\| = \mathcal{O}(\frac{1}{r}).$
\end{lem}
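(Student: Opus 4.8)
The plan is to exploit the uniform strong convexity granted by Assumption A3. Since $\ghat(\cdot,y,\xi)$ is $\gamma$-strongly convex in its first argument for every $(y,\xi)\in\cX\times\Xi$, and every point $x^{i-1}$ produced by the algorithm lies in $\cX$, the average $\fhat^r(x)=\frac{1}{r}\sum_{i=1}^r\ghat(x,x^{i-1},\xi^i)$ is itself $\gamma$-strongly convex in $x$, and likewise $\fhat^{r+1}$. Because $x^{r+1}$ minimizes $\fhat^{r+1}$ over the convex set $\cX$, the first-order optimality condition $(\fhat^{r+1})'(x^{r+1};x^r-x^{r+1})\geq 0$ combined with the strong-convexity inequality of A3 yields the quadratic growth bound
\[
\fhat^{r+1}(x^r) - \fhat^{r+1}(x^{r+1}) \geq \frac{\gamma}{2}\|x^{r+1}-x^r\|^2 .
\]
The entire argument then reduces to upper-bounding the left-hand side by a quantity of order $\frac{1}{r}\|x^{r+1}-x^r\|$.

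To do this I would use the elementary recursion $\fhat^{r+1}(x)=\frac{r}{r+1}\fhat^r(x)+\frac{1}{r+1}\ghat(x,x^r,\xi^{r+1})$, which is immediate from the definitions. Evaluating at $x=x^r$ and $x=x^{r+1}$ and subtracting gives
\[
\fhat^{r+1}(x^r) - \fhat^{r+1}(x^{r+1}) = \frac{r}{r+1}\big[\fhat^r(x^r)-\fhat^r(x^{r+1})\big] + \frac{1}{r+1}\big[\ghat(x^r,x^r,\xi^{r+1})-\ghat(x^{r+1},x^r,\xi^{r+1})\big].
\]
The key simplification is that the first bracket is nonpositive: $x^r$ minimizes $\fhat^r$ over $\cX$ and $x^{r+1}\in\cX$. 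Discarding it leaves only the single-sample term scaled by $\frac{1}{r+1}$.

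It then remains to bound $\ghat(x^r,x^r,\xi^{r+1})-\ghat(x^{r+1},x^r,\xi^{r+1})$ by $\kappa\|x^{r+1}-x^r\|$ with $\kappa=K+K'$. Writing $\ghat=\ghat_1+g_2$, the smooth part is handled by Assumption B3: the bound $\|\nabla_x\ghat_1\|\leq K$ makes $\ghat_1(\cdot,y,\xi)$ Lipschitz in $x$ with constant $K$. The nonsmooth part is handled by Assumptions B4 and B5: convexity of $g_2$ in $x$ together with $|g_2'(x,\xi;d)|\leq K'\|d\|$ gives, via the two-sided bounds that convexity supplies, Lipschitz continuity of $g_2(\cdot,\xi)$ with constant $K'$. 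Adding the two estimates yields $\kappa=K+K'$.

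Combining everything produces
\[
\frac{\gamma}{2}\|x^{r+1}-x^r\|^2 \leq \frac{\kappa}{r+1}\|x^{r+1}-x^r\|,
\]
and dividing through by $\|x^{r+1}-x^r\|$ (the claim being trivial when the two iterates coincide) gives $\|x^{r+1}-x^r\|\leq \frac{2\kappa}{\gamma(r+1)}=\mathcal{O}(\frac{1}{r})$. I expect the only delicate point to be the nonsmooth term: one must avoid invoking a gradient of $g_2$ and instead derive its Lipschitz bound purely from convexity (B4) and the directional-derivative estimate (B5). Everything else is a routine consequence of strong convexity and the optimality of the successive minimizers.
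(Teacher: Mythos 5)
Your proof is correct and follows essentially the same route as the paper's: quadratic growth from the uniform strong convexity of the surrogate at a minimizer, the optimality of both consecutive iterates, and the Lipschitz bound $\kappa=K+K'$ on $\ghat(\cdot,y,\xi)$ derived from B3 and B5. The only (harmless) difference is bookkeeping — you apply strong convexity to $\fhat^{r+1}$ at $x^{r+1}$ and discard the nonpositive term $\frac{r}{r+1}\bigl[\fhat^r(x^r)-\fhat^r(x^{r+1})\bigr]$, whereas the paper applies it to $\fhat^r$ at $x^r$ and bounds the resulting sum over past samples in absolute value, so your version even yields a slightly smaller constant.
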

\begin{proof}
The proof of this lemma is similar to the proof of \cite[Lemma 1]{Mairal2010SparseDL}; see also \cite[Proposition 4.32]{BonnansShapiroBook2000}.
First of all, since $x^r$ is the minimizer of $\fhat^r(\cdot)$, the first order optimality condition implies
\[
\fhat^r(x^r;d) \geq 0, \quad \forall\ d\in \mathbb{R}^n.
\]
Hence, it follows from the assumption A3 that
\begin{equation}
\label{eq:lemma51}
\fhat^r(x^{r+1}) - \fhat^r(x^r) \geq \frac{\gamma}{2} \|x^{r+1} - x^r\|^2.
\end{equation}
On the other hand,
\begin{align}
\fhat^r(x^{r+1}) - \fhat^r(x^r) &\leq \fhat^r(x^{r+1}) - \fhat^{r+1}(x^{r+1}) + \fhat^{r+1}(x^r) - \fhat^r(x^r) \label{eq:Lemma52}\\
&\leq \frac{1}{r(r+1)} \sum_{i=1}^r |\ghat(x^{r+1},x^{i-1},\xi^i) - \ghat(x^r,x^{i-1},\xi^i)| \nonumber\\
&\quad  + \frac{1}{r+1}|\ghat(x^{r+1},x^r,\xi^{r+1}) - \ghat(x^r,x^r,\xi^{r+1})| \nonumber\\
&\leq \frac{\theta}{r+1} \|x^{r+1} - x^r\| \label{eq:lemma53},
\end{align}
where \eqref{eq:Lemma52} follows from the fact that $x^{r+1}$ is the minimizer of $\fhat^{r+1}(\cdot)$, the second inequality is due to the definitions of $\fhat^r$ and $\fhat^{r+1}$, while \eqref{eq:lemma53} is the result of the assumptions B3 and B5. Combining \eqref{eq:lemma51} and \eqref{eq:lemma53} yields the desired result.
\end{proof}
\begin{lem}
\label{lemma:sequencelimitzero}
Assume $\alpha^r > 0$ and $\sum_{r=1}^\infty \frac{\alpha^r}{r} <\infty$. Furthermore, suppose that $|\alpha^{r+1}-\alpha^r| \le {\tau}/{r}$ for all $r$. Then $\lim_{r\rightarrow \infty} \alpha^r = \infty$.
\end{lem}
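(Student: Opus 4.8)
The conclusion of the statement should read $\lim_{r\to\infty}\alpha^r = 0$; the ``$\infty$'' is a typo, since a sequence diverging to $+\infty$ would force $\sum_r \alpha^r/r = \infty$, contradicting the hypothesis, and the application in the proof of Lemma~\ref{lemma:mainlemma} requires the limit to be $0$. So the plan is to prove $\alpha^r \to 0$, arguing by contradiction and exploiting the tension between the slow-variation bound $|\alpha^{r+1}-\alpha^r|\le\tau/r$ and the summability $\sum_r \alpha^r/r<\infty$. Suppose $\alpha^r\not\to 0$; then $\limsup_r \alpha^r = 2\delta$ for some $\delta>0$, so there are infinitely many indices $r$ with $\alpha^r>\delta$.

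First I would show that a single ``high'' index forces a long plateau. Fix a large index $p$ with $\alpha^p>\delta$. For $r\ge p$, telescoping together with the slow-variation bound gives
\[
\alpha^r \ge \alpha^p-\sum_{j=p}^{r-1}|\alpha^{j+1}-\alpha^j| > \delta - \tau\sum_{j=p}^{r-1}\frac1j .
\]
Set $K\triangleq e^{\delta/(4\tau)}>1$. Using $\sum_{j=p}^{r-1}1/j \le \ln(r/p)+O(1/p)$, one gets $\tau\sum_{j=p}^{r-1}1/j \le \delta/4 + O(1/p) < \delta/2$ for all $r\in[p,Kp]$ once $p$ is large enough, hence $\alpha^r>\delta/2$ throughout the index range $[p,Kp]$.

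Next I would convert this plateau into a fixed positive contribution to the sum. On $[p,Kp]$,
\[
\sum_{r=p}^{\lfloor Kp\rfloor}\frac{\alpha^r}{r} > \frac{\delta}{2}\sum_{r=p}^{\lfloor Kp\rfloor}\frac1r \ge \frac\delta2\big(\ln K - o(1)\big) = \frac{\delta^2}{8\tau}-o(1),
\]
which exceeds the constant $c\triangleq \delta^2/(16\tau)>0$ for all large $p$, \emph{independently of $p$}. Finally, since $\alpha^r>\delta$ holds for infinitely many indices, I would greedily extract a subsequence $p_1<p_2<\cdots$ with $\alpha^{p_m}>\delta$ and $p_{m+1}>Kp_m$, so that the intervals $[p_m,Kp_m]$ are pairwise disjoint and each contributes at least $c$. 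Summing over $m$ yields $\sum_r \alpha^r/r\ge \sum_m c=\infty$, contradicting the hypothesis; hence $\alpha^r\to 0$.

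The main obstacle is purely the harmonic-sum bookkeeping: making the $O(1/p)$ error terms in the estimate $\sum_{j=p}^{r-1}1/j = \ln(r/p)+O(1/p)$ precise enough to guarantee both the plateau height $\delta/2$ and the fixed contribution $c$ \emph{simultaneously and uniformly} for all sufficiently large $p$, and then checking that the selected forward-intervals can indeed be made disjoint. The conceptual content, however, is clean: the logarithmic weighting $1/r$ exactly matches the logarithmic (multiplicative) width $\sim e^{\Theta(\delta/\tau)}$ of any bump that the ``Lipschitz-in-$1/r$'' constraint permits, so each excursion of $\alpha^r$ above $\delta$ costs a constant amount of mass in $\sum_r \alpha^r/r$, and infinitely many such excursions are incompatible with convergence of the series.
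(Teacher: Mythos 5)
Your proof is correct, and you are right that the stated conclusion $\lim_{r\to\infty}\alpha^r=\infty$ is a typo for $\lim_{r\to\infty}\alpha^r=0$ (the paper's own proof ends by concluding $\limsup_{r\to\infty}\alpha^r=0$). Your route is, however, genuinely different from the paper's. The paper first observes $\liminf_r\alpha^r=0$, supposes $\limsup_r\alpha^r>\epsilon$, and partitions the indices into up-crossing intervals $[m_j,n_j)$ where $\alpha^r>\epsilon/3$ and down-crossing intervals where $\alpha^r\le\epsilon/3$; on an up-crossing interval it bounds the total variation by
$\sum_{r=r_0}^{n_j-1}\tau/r\le(3/\epsilon)\sum_{r=r_0}^{n_j-1}(\tau/r)\,\alpha^r$,
which is at most $\epsilon/3$ once the tail $\sum_{r\ge\bar r}\alpha^r/r$ is small, so that $\alpha^{r_0}\le\alpha^{n_j}+\epsilon/3\le 2\epsilon/3$ eventually, contradicting $\limsup_r\alpha^r>\epsilon$. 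You instead lower-bound the cost of each excursion: a single index $p$ with $\alpha^p>\delta$ forces $\alpha^r>\delta/2$ on the whole multiplicative window $[p,e^{\delta/(4\tau)}p]$, each such window contributes at least the fixed amount $\delta^2/(16\tau)$ to $\sum_r\alpha^r/r$, and infinitely many disjoint windows make the series diverge. The two arguments are dual in spirit --- the paper uses tail-smallness of the series to cap the height of excursions, you use the persistence of excursions to lower-bound the series --- and both exploit the same matching between the weight $1/r$ and the slow-variation bound $\tau/r$ at multiplicative (logarithmic) scale. Your version dispenses with the $\liminf=0$ step and the crossing-interval bookkeeping, and it works verbatim even if $\limsup_r\alpha^r=+\infty$ provided you phrase the contradiction hypothesis as ``there exists $\delta>0$ with $\alpha^r>\delta$ for infinitely many $r$'' rather than ``$\limsup_r\alpha^r=2\delta$'' (that is all your argument actually uses); the paper's version avoids the explicit harmonic-sum estimates that you correctly identify as the only bookkeeping burden in yours. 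Both are complete, elementary proofs.
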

\begin{proof}
%The proof of this step is similar to \cite[Proposition 1.2.4]{Bers}. Let us fix a random realization $\{\xi^r\}_{r=1}^{\infty}$ in the set of probability one for which \eqref{eq:Lemma29} and \eqref{eq:Lemma33} hold. Define
%\[
%\alpha^r \triangleq \fhat^r(x^r) - f^r(x^r).
%\]
%Clearly, $\alpha^r \geq 0$ due to the assumption A2. Using the definition of $\alpha^r$, \eqref{eq:Lemma29} can be rewritten as
%\[
Since  $\sum_{r=1}^{\infty} \frac{\alpha^r}{r} < \infty$, we have $\liminf_{r \rightarrow \infty} \alpha^r = 0$. Now, we prove the result using contradiction. Assume the contrary so that
\begin{equation}
\label{eq:Lemma61}
\limsup_{r \rightarrow \infty} \alpha^r >\epsilon,
\end{equation}
for some $\epsilon >0$. Hence there should exist subsequences $\{m_j\}$ and $\{n_j\}$ with $m_j\leq n_j<m_{j+1},\forall\ j$ so that
\begin{align}
\frac{\epsilon}{3} < \alpha^r \quad\quad &m_j \leq r <n_j, \label{eq:Lemma62}\\
\alpha^r \leq \frac{\epsilon}{3} \quad\quad & n_j \leq r < m_{j+1}. \label{eq:Lemma63}
\end{align}
On the other hand, since $\sum_{r=1}^{\infty} \frac{\alpha^r}{r} < \infty$, there exists an index $\bar{r}$ such that
\begin{equation}
\label{eq:Lemma64}
\sum_{r = \bar{r}}^{\infty} \frac{\alpha^r}{r} < \frac{\epsilon^2}{9\tau}.
\end{equation}
%where $\tau$ is the constant in \eqref{eq:Lemma33}, i.e.,
%\begin{equation}
%\label{eq:Lemma65}
%|\alpha^{r+1} - \alpha^r| < \frac{\tau}{r}.
%\end{equation}
Therefore, for every $r_0 \geq \bar{r}$ with $m_j \leq r_0 \leq n_j-1$, we have
\begin{align}
|\alpha^{n_j} - \alpha^{r_0}| &\leq \sum_{r = r_0}^{n_j-1}|\alpha^{r+1} - \alpha^r| \nonumber\\
&\leq \sum_{r = r_0}^{n_j-1} \frac{\tau}{r} \label{eq:Lemma66}\\
&\leq \frac{3}{\epsilon} \sum_{r = r_0}^{n_j-1} \frac{\tau}{r} \alpha^r \label{eq:Lemma67} \\
&\leq \frac{3\tau \epsilon^2}{9\epsilon \tau} = \frac{\epsilon}{3},\label{eq:Lemma68}
\end{align}
where the equation \eqref{eq:Lemma67} follows from \eqref{eq:Lemma62}, and \eqref{eq:Lemma68} is the direct consequence of \eqref{eq:Lemma64}. Hence the triangle inequality implies
\[
\alpha^{r_0} \leq \alpha^{n_j} + |\alpha^{n_j} - \alpha^{r_0}| \leq \frac{\epsilon}{3} + \frac{\epsilon}{3} = \frac{2\epsilon}{3},
\]
for any $r_0 \geq \bar{r}$, which contradicts \eqref{eq:Lemma61}, implying that
$$
\limsup_{r\rightarrow \infty} \alpha^r = 0.
$$
\end{proof}

\bibliographystyle{spmpsci}

\bibliography{biblio}

\begin{thebibliography}{10}
\providecommand{\url}[1]{{#1}}
\providecommand{\urlprefix}{URL }
\expandafter\ifx\csname urlstyle\endcsname\relax
  \providecommand{\doi}[1]{DOI~\discretionary{}{}{}#1}\else
  \providecommand{\doi}{DOI~\discretionary{}{}{}\begingroup
  \urlstyle{rm}\Url}\fi

\bibitem{plambeck1996sample}
Plambeck, E.L., Fu, B.R., Robinson, S.M., Suri, R.: Sample-path optimization of
  convex stochastic performance functions.
\newblock Mathematical Programming \textbf{75}(2), 137--176 (1996)

\bibitem{robinson1996analysis}
Robinson, S.M.: Analysis of sample-path optimization.
\newblock Mathematics of Operations Research \textbf{21}(3), 513--528 (1996)

\bibitem{healy1991retrospective}
Healy, K., Schruben, L.W.: Retrospective simulation response optimization.
\newblock In: Proceedings of the 23rd conference on Winter simulation, pp.
  901--906. IEEE Computer Society (1991)

\bibitem{rubinstein1990optimization}
Rubinstein, R.Y., Shapiro, A.: Optimization of static simulation models by the
  score function method.
\newblock Mathematics and Computers in Simulation \textbf{32}(4), 373--392
  (1990)

\bibitem{rubinstein1993discrete}
Rubinstein, R.Y., Shapiro, A.: Discrete event systems: Sensitivity analysis and
  stochastic optimization by the score function method, vol. 346.
\newblock Wiley New York (1993)

\bibitem{shapiro2003monte}
Shapiro, A.: Monte carlo sampling methods.
\newblock Handbooks in operations research and management science \textbf{10},
  353--426 (2003)

\bibitem{shapiro2009lectures}
Shapiro, A., Dentcheva, D., Ruszczy{\'n}ski, A.: Lectures on stochastic
  programming: modeling and theory, vol.~9.
\newblock Society for Industrial and Applied Mathematics (2009)

\bibitem{kim2011guide}
Kim, S., Pasupathy, R., Henderson, S.: A guide to sample-average approximation
  (2011)

\bibitem{BSUM}
Razaviyayn, M., Hong, M., Luo, Z.Q.: A unified convergence analysis of block
  successive minimization methods for non-smooth optimization.
\newblock arXiv preprint arXiv:1209.2385  (2012)

\bibitem{CCCP}
Yuille, A.L., Rangarajan, A.: The concave-convex procedure.
\newblock Neural Computation \textbf{15}, 915--936 (2003)

\bibitem{EMTutorial}
Borman, S.: The expectation maximization algorithm - a short tutorial.
\newblock Unpublished paper
  \urlprefix\url{http://ftp.csd.uwo.ca/faculty/olga/Courses/Fall2006/Papers/EM_algorithm.pdf}

\bibitem{EMDempster}
Dempster, A.P., Laird, N.M., Rubin, D.B.: Maximum likelihood from incomplete
  data via the {EM} algorithm.
\newblock Journal of the Royal Statistical Society Series B \textbf{39}, 1--38
  (1977)

\bibitem{FristedtProbabilityBook}
Fristedt, B.E., Gray, L.F.: A modern approach to probability theory.
\newblock Birkhuser Boston (1996)

\bibitem{dunford1958linear}
Dunford, N., Schwartz, J.T.: Linear Operators. Part 1: General Theory.
\newblock Interscience Publ. New York (1958)

\bibitem{LarssonGame}
Larsson, E., Jorswieck, E.: Competition versus cooperation on the {MISO}
  interference channel.
\newblock In: IEEE Journal on Selected Areas in Communications, vol.~26, pp.
  1059--1069 (2008)

\bibitem{RazaviyaynGame}
Razaviyayn, M., Luo, Z.Q., Tseng, P., Pang, J.S.: A stackelberg game approach
  to distributed spectrum management.
\newblock In: Mathematical programming, vol. 129, pp. 197--224 (2011)

\bibitem{BengtssonBook}
Bengtsson, M., Ottersten, B.: Optimal and suboptimal transmit beamforming
  (2001)

\bibitem{HoniginterferencePricing}
Shi, C., Berry, R.A., Honig, M.L.: Local interference pricing for distributed
  beamforming in {MIMO} networks.
\newblock In: Military Communications Conference, MILCOM, pp. 1--6 (2009)

\bibitem{KimGiannakisBF}
Kim, S.J., Giannakis, G.B.: Optimal resource allocation for {MIMO} ad hoc
  cognitive radio networks.
\newblock In: IEEE Transactions on Information Theory, vol.~57, pp. 3117--3131
  (2011)

\bibitem{RazaviyaynWMMSE}
Shi, Q., Razaviyayn, M., Luo, Z.Q., , He, C.: An iteratively weighted {MMSE}
  approach to distributed sum-utility maximization for a {MIMO} interfering
  broadcast channel.
\newblock In: IEEE Transactions on Signal Processing, vol.~59, pp. 4331--4340
  (2011)

\bibitem{RazaviyaynIABF}
Razaviyayn, M., Sanjabi, M., Luo, Z.Q.: Linear transceiver design for
  interference alignment: Complexity and computation.
\newblock In: IEEE Transactions on Information Theory, vol.~58, pp. 2896--2910
  (2012)

\bibitem{scutari2013decomposition}
Scutari, G., Facchinei, F., Song, P., Palomar, D.P., Pang, J.S.: Decomposition
  by partial linearization: Parallel optimization of multi-agent systems.
\newblock arXiv preprint arXiv:1302.0756  (2013)

\bibitem{scutari2011distributed}
Scutari, G., Palomar, D.P., Facchinei, F., Pang, J.S.: Distributed dynamic
  pricing for mimo interfering multiuser systems: A unified approach.
\newblock In: Network Games, Control and Optimization (NetGCooP), 2011 5th
  International Conference on, pp. 1--5. IEEE (2011)

\bibitem{hong2012signal}
Hong, M., Luo, Z.Q.: Signal processing and optimal resource allocation for the
  interference channel.
\newblock arXiv preprint arXiv:1206.5144  (2012)

\bibitem{GershmanEldarRobust}
Wajid, I., Eldar, Y.C., Gershman, A.: Robust downlink beamforming using
  covariance channel state information.
\newblock In: IEEE International Conference on Acoustics, Speech and Signal
  Processing, ICASSP, pp. 2285--2288 (2009)

\bibitem{BocheRobust}
Vucic, N., Boche, H.: Downlink precoding for multiuser {MISO} systems with
  imperfect channel knowledge.
\newblock In: IEEE International Conference on Acoustics, Speech and Signal
  Processing, ICASSP, pp. 3121--3124 (2008)

\bibitem{EnbinRobust}
Song, E., Shi, Q., Sanjabi, M., Sun, R., Luo, Z.Q.: Robust {SINR}-constrained
  {MISO} downlink beamforming: When is semidefinite programming relaxation
  tight?
\newblock In: IEEE International Conference on Acoustics, Speech and Signal
  Processing, ICASSP, pp. 3096--3099 (2011)

\bibitem{TajerRobust}
Tajer, A., Prasad, N., Wang, X.: Robust linear precoder design for multi-cell
  downlink transmission.
\newblock In: IEEE Transactions on Signal Processing, vol.~59, pp. 235--251
  (2011)

\bibitem{DavidsonShenoudaRobust}
Shenouda, M., Davidson, T.N.: On the design of linear transceivers for
  multiuser systems with channel uncertainty.
\newblock In: IEEE Journal on Selected Areas in Communications, vol.~26, pp.
  1015--1024 (2008)

\bibitem{li2011coordinated}
Li, W.C., Chang, T.H., Lin, C., Chi, C.Y.: Coordinated beamforming for
  multiuser miso interference channel under rate outage constraints  (2011)

\bibitem{SlockWSRMPartialCSIT}
Negro, F., Ghauri, I., Slock, D.: Sum rate maximization in the noisy {MIMO}
  interfering broadcast channel with partial {CSIT} via the expected weighted
  {MSE}.
\newblock In: International Symposium on Wireless Communication Systems, ISWCS,
  pp. 576--580 (2012)

\bibitem{GroupingRazaviyayn}
Razaviyayn, M., Baligh, H., Callard, A., Luo, Z.Q.: Joint transceiver design
  and user grouping in a {MIMO} interfering broadcast channel.
\newblock 45th Annual Conference on Information Sciences and Systems (CISS) pp.
  1--6 (2011)

\bibitem{WMMSETSP}
Shi, Q., Razaviyayn, M., Luo, Z.Q., He, C.: An iteratively weighted {MMSE}
  approach to distributed sum-utility maximization for a {MIMO} interfering
  broadcast channel.
\newblock IEEE Transactions on Signal Process. \textbf{59}, 4331--4340 (2011)

\bibitem{guo2005mutual}
Guo, D., Shamai, S., Verd{\'u}, S.: Mutual information and minimum mean-square
  error in {Gaussian} channels.
\newblock IEEE Transactions on Information Theory \textbf{51}(4), 1261--1282
  (2005)

\bibitem{sampath2001generalized}
Sampath, H., Stoica, P., Paulraj, A.: Generalized linear precoder and decoder
  design for {MIMO} channels using the weighted {MMSE} criterion.
\newblock IEEE Transactions on Communications \textbf{49}(12), 2198--2206
  (2001)

\bibitem{hong2013joint}
Hong, M., Sun, R., Baligh, H., Luo, Z.Q.: Joint base station clustering and
  beamformer design for partial coordinated transmission in heterogeneous
  networks.
\newblock IEEE Journal on Selected Areas in Communications \textbf{31}(2),
  226--240 (2013)

\bibitem{3gpp}
{3GPP} {TR} 36.814.
\newblock In: http://www.3gpp.org/ftp/specs/ archive/36\_series/36.814/

\bibitem{aharon2005k}
Aharon, M., Elad, M., Bruckstein, A.: {K-SVD}: Design of dictionaries for
  sparse representation.
\newblock Proceedings of SPARS \textbf{5}, 9--12 (2005)

\bibitem{lewicki2000learning}
Lewicki, M.S., Sejnowski, T.J.: Learning overcomplete representations.
\newblock Neural computation \textbf{12}(2), 337--365 (2000)

\bibitem{Mairal2010SparseDL}
Mairal, J., Bach, F., Ponce, J., Sapiro, G.: Online learning for matrix
  factorization and sparse coding.
\newblock The Journal of Machine Learning Research \textbf{11}, 19--60 (2010)

\bibitem{Bertsekas_Book_Nonlinear}
Bertsekas, D.P.: Nonlinear Programming, second edn.
\newblock Athena-Scientific (1999)

\bibitem{robbins1951stochastic}
Robbins, H., Monro, S.: A stochastic approximation method.
\newblock The Annals of Mathematical Statistics pp. 400--407 (1951)

\bibitem{kiefer1952stochastic}
Kiefer, J., Wolfowitz, J.: Stochastic estimation of the maximum of a regression
  function.
\newblock The Annals of Mathematical Statistics \textbf{23}(3), 462--466 (1952)

\bibitem{nemirovski2009robust}
Nemirovski, A., Juditsky, A., Lan, G., Shapiro, A.: Robust stochastic
  approximation approach to stochastic programming.
\newblock SIAM Journal on Optimization \textbf{19}(4), 1574--1609 (2009)

\bibitem{amari1967theory}
Amari, S.: A theory of adaptive pattern classifiers.
\newblock IEEE Transactions on Electronic Computers (3), 299--307 (1967)

\bibitem{Wijnhoven2010b}
Wijnhoven, R., With, P.H.N.D.: Fast training of object detection using
  stochastic gradient descent.
\newblock In: Proc. IEEE International Conference on Pattern Recognition
  (ICPR), pp. 424--427 (2010)

\bibitem{grippo2000}
Grippo, L.: Convergent on-line algorithms for supervised learning in neural
  networks.
\newblock IEEE Transactions on Neural Networks \textbf{11}(6), 1284--1299
  (2000)

\bibitem{mangasarian1994}
Mangasarian, O.L., Solodov, M.V.: Serial and parallel backpropagation
  convergence via nonmonotone perturbed minimization.
\newblock Optimization Methods and Software \textbf{4}(2), 103--116 (1994)

\bibitem{luo1991LMS}
Luo, Z.Q.: On the convergence of the {LMS} algorithm with adaptive learning
  rate for linear feedforward networks.
\newblock Neural Computation \textbf{3}(2), 226--245 (1991)

\bibitem{luo1994backpropagation}
Luo, Z.Q., Tseng, P.: Analysis of an approximate gradient projection method
  with applications to the backpropagation algorithm.
\newblock Optimization Methods and Software \textbf{4}(2), 85--101 (1994)

\bibitem{bottou1998online}
Bottou, L.: Online learning and stochastic approximations.
\newblock On-line learning in neural networks \textbf{17}, 9 (1998)

\bibitem{bertsekas1997leastsquares}
Bertsekas, D.P.: A new class of incremental gradient methods for least squares
  problems.
\newblock SIAM Journal on Optimization \textbf{7}(4), 913--926 (1997)

\bibitem{Bertsekas_Book_Distr}
Bertsekas, D.P., Tsitsiklis, J.N.: Parallel and Distributed Computation:
  Numerical Methods, second edn.
\newblock Athena-Scientific (1999)

\bibitem{tsitsiklis1986}
Tsitsiklis, J., Bertsekas, D.P., Athans, M.: Distributed asynchronous
  deterministic and stochastic gradient optimization algorithms.
\newblock IEEE Transactions on Automatic Control \textbf{31}(9), 803--812
  (1986)

\bibitem{bertsekas1983distributed}
Bertsekas, D.P.: Distributed asynchronous computation of fixed points.
\newblock Mathematical Programming \textbf{27}(1), 107--120 (1983)

\bibitem{ermol1998stochastic}
Ermol'ev, Y.M., Norkin, V.I.: Stochastic generalized gradient method for
  nonconvex nonsmooth stochastic optimization.
\newblock Cybernetics and Systems Analysis \textbf{34}(2), 196--215 (1998)

\bibitem{bertsekas2011incremental}
Bertsekas, D.P.: Incremental gradient, subgradient, and proximal methods for
  convex optimization: a survey.
\newblock Optimization for Machine Learning p.~85 (2011)

\bibitem{tseng1998incremental}
Tseng, P.: An incremental gradient (-projection) method with momentum term and
  adaptive stepsize rule.
\newblock SIAM Journal on Optimization \textbf{8}(2), 506--531 (1998)

\bibitem{nesterov2009primal}
Nesterov, Y.: Primal-dual subgradient methods for convex problems.
\newblock Mathematical programming \textbf{120}(1), 221--259 (2009)

\bibitem{xiao2010dual}
Xiao, L.: Dual averaging methods for regularized stochastic learning and online
  optimization.
\newblock The Journal of Machine Learning Research \textbf{11}, 2543--2596
  (2010)

\bibitem{shalev2011stochastic}
Shalev-Shwartz, S., Tewari, A.: Stochastic methods for $\ell_1$-regularized
  loss minimization.
\newblock The Journal of Machine Learning Research \textbf{12}, 1865--1892
  (2011)

\bibitem{Fisk1965Quasimtgle}
Fisk, D.L.: Quasi-martingales.
\newblock Transactions of the American Mathematical Society \textbf{120},
  369--389 (1965)

\bibitem{VandervaartDonsker2000}
{Van der Vaart}, A.W.: Asymptotic statistics (Vol. 3).
\newblock Cambridge university press (2000)

\bibitem{HewittSavage01law}
Hewitt, E., Savage, L.J.: Symmetric measures on cartesian products.
\newblock Transactions of the American Mathematical Society \textbf{80},
  470--501 (1955)

\bibitem{BonnansShapiroBook2000}
Bonnans, J.F., Shapiro, A.: Perturbation analysis of optimization problems.
\newblock Springer Verlag (2000)

\end{thebibliography}
%\bibliography{IEEEabrv,biblio}
%\bibliography
\end{document}